\documentclass[a4paper]{article}
\usepackage[utf8]{inputenc}
\usepackage{amstext,amsmath,amsthm,amssymb}   
\usepackage{breqn}
\usepackage{latexsym}
\usepackage{graphicx}
\usepackage{url}

\newtheorem{theo}{Theorem}
\newtheorem{defn}{Definition}
\newtheorem{prop}{Proposition}
\newtheorem{lemma}{Lemma}
\newtheorem{rem}{Remark}
\newtheorem{cor}{Corollary}

\newcommand{\N}{\mathbb{N}}
\newcommand{\Z}{\mathbb{Z}}

\newcommand{\nrela}[2][X]{\;\hat{r}^{#2,N}_{#1}\;}
\newcommand{\nrelaeq}[2][X]{\;r^{#2,N}_{#1}\;}
\newcommand{\parp}[3][X]{{\cal R}^{#2, #3}_{#1}}
\newcommand{\Nlan}[2][N]{\mathcal{L}_{#1}(#2)}
\newcommand{\lan}[1]{\mathcal{L}(#1)}
\newcommand{\ngra}[3][X]{\mathcal{G}^{#2,#3}_{#1}}

\newcommand{\alphab}[1]{\mathcal{#1}}

\newcommand{\nsgra}[3]{\mathcal{G}_{#1, #2, #3}}

\newcommand\ie{\textit{i.e} }

\newcommand{\sm}{\sqsubset}
\newcommand{\defi}[1]{\emph{#1}}
\newcommand{\slr}[2]{S_{#1}({#2})}
\newcommand{\sll}[2]{P_{#1}({#2})}

\newcommand\card[1]{\left|#1\right|}

\newcommand\words[1]{#1^{\infty}}

\newcommand{\condi}[0]{\mid}
\newcommand{\projI}[1][N]{\varphi^{#1}_{X}}
\newcommand{\proj}[2][N]{\varphi^{#2,#1}_{X}}
\newcommand{\projBis}[2][N]{\varphi^{#2,#1}_{Y}}
\newcommand{\projXI}[1][N]{\pi^{#1}_{X}}
\newcommand{\projX}[2][N]{\pi^{#2,#1}_{X}}

\newcommand{\projZ}[0]{\delta}
\newcommand{\NBP}[2]{{#1}^{[#2]}}
\newcommand{\x}[1]{{c}^{(#1)}}
\newcommand{\y}[1]{{s}^{(#1)}}
\newcommand{\w}[1]{{t}^{(#1)}}
\newcommand{\z}[1]{{\ell}^{(#1)}}
\newcommand{\s}[1]{{k}^{(#1)}}
\newcommand{\similar}[0]{\sim}
\newcommand{\lforb}[1]{L(#1)}
\newcommand{\Stp}[0]{L}
\newcommand{\dix}[0]{\sharp}
\newcommand{\toalph}[1]{\left[#1\right]}

\newcommand{\colt}[3]{\scriptsize\left[\hspace{-.75em}\begin{array}{c}#1\\#2\\#3\end{array}\hspace{-.75em}\right]}


\title{$N$-block presentations and decidability of direct conjugacy between Subshifts of Finite Type}
\author{\'Emilie Delnieppe\\ \small Aix-Marseille Universit\'e, CNRS, Centrale Marseille, I2M UMR7373, Marseille, France\\ 
\small E-mail: \url{emilie.delnieppe@univ-amu.fr}}

\begin{document}

\maketitle

\begin{abstract}
We consider the problem of inverting the transformation which consists in replacing a word by the sequence of its blocks of length $N$, i.e. its so-called $N$-block presentation.
It was previously shown that among all the possible preimages of an $N$-block presentation, there exists a particular one which is maximal in the sense that all the other preimages can be obtained from it by letter to letter applications.
We give here a combinatorial characterization of the maximal preimages of $N$-block presentations.
Using this characterization, we show that, being given two subshifts of finite type $X$ and $Y$,  the existence of two numbers $N$ and $M$ such that the  $N$-block presentation of $X$ is similar to the $M$-block presentation of $Y$, which implies that $X$ and $Y$ are conjugate, is decidable. 

{\bf Key words:} Symbolic dynamics, Higher block presentation, Subshift of Finite Type, Conjugacy

\end{abstract}

\section{Introduction}

Sliding-block coding is a central transformation in Symbolic Dynamics, because they represent all possible dynamical factor maps between symbolic systems.
The canonical sliding-block code of length $N$ is the $N^{\mbox{\tiny th}}$ higher-block code, because any sliding-block code is a composition of a higher-block code and a letter-to-letter application \cite{Lind}.
It maps a given word to the sequence of its blocks of length $N$, its so-called $N$-block presentation.
We call \defi{$N$-preimage} of a word $u$ every word of which the $N^{\mbox{\tiny th}}$ higher-block code is equal to $u$, up to a renaming of its letters. A preimage $v$ of $u$ is said \defi{maximal} if all other preimages of $u$ can be obtained through letter-to-letter maps from $v$. The existence of such a preimage for any block presentation was proved in \cite{ecriture}, which also provided a combinatorial characterization of $N$-block presentations.
We study here some properties of preimages of block presentations. In particular, we give a combinatorial characterization of those which are maximal. 

While the conjugacy between two SFT in the one-sided case is decidable
\cite{WIL}, it remains an open question in the two-sided case while being connected to many open problems \cite{BOY}. 
Among related results, let us mention the decidability of
conjugacy for tree-shifts of finite type \cite{BEA}, or of
strong shift equivalence \cite{KIM}. We introduce here the notion of \defi{direct
conjugacy} as follows: two SFTs $X$ and $Y$ are directly conjugate if
two positive integers $M$ and $N$ exist such that $X^{M} = Y^{M}$ up to a renaming of their letters.
By using results obtained about preimages of block presentations, we prove the decidability of direct conjugacy between two SFTs.

The rest of the paper is organized as follows. Section \ref{s:notation} presents the notations and definitions. Since \cite{ecriture} dealed only with single words, we adapt in Section \ref{s:preimage} some of its notions and results about preimages of higher block presentations in order to deal with set of words. In Section \ref{s:composing}, we show that preimages of higher block presentations cannot be composed in the general case but we characterize the situations in which it is possible. Section \ref{s:charac} presents our characterization of maximal preimages of $N$-block presentations. In the last section, we show that the direct conjugacy between SFTs is decidable by using results obtained on preimages in the previous sections.

\section{Notation and definitions}\label{s:notation}

\subsection{Words and word sets}
We put $\card{\mathcal{S}}$ for the cardinal of any finite set $\mathcal{S}$. An alphabet $\alphab{A}$ is a finite set of elements called letters or symbols. A word on an alphabet $\alphab{A}$ is a finite, infinite or bi-infinite sequence of symbols in $\alphab{A}$. Each time it will need to be specified, we will talk about finite, infinite or bi-infinite words. Infinite (resp. bi-infinite) words are indiced on $\N$ (resp. $\Z$). We put $|w|$ for the length of the finite words $w$ which are indiced from $0$, i.e. $w = w_{0} \dots w_{|w|-1}$.
For two positions $i\leq j$ of $w$, $w_{[i,j]}$ denotes the subword of $w$ which starts at position $i$ and ends at $j$, namely  $w_{[i,j]} = w_{i} \dots w_{j}$. A prefix of $w$ is a subword of the form $w_{[0,i]}$, with $i<|w|$. Symmetrically, a suffix of a finite word $w$ is a subword of the form $w_{[i,|w|-1]}$ with  $i\geq 0$.

We put 
\begin{itemize}
\item $\alphab{A}^{n}$ for the set of the words of length $n$ of $\alphab{A}$, \item $\alphab{A}^{\star}$ for the set of the finite words of  $\alphab{A}$, 
\item $\alphab{A}^{\N}$ for the set of the infinite words of  $\alphab{A}$,
\item $\alphab{A}^{\Z}$ for the set of the bi-infinite words of $\alphab{A}$,
\item $\words{\alphab A}=\alphab A^\star\sqcup\alphab A^\N\sqcup\alphab A^\Z$ for the set of all words.
\end{itemize}

A \defi{word set} on $\alphab{A}$ is a set $X\subset\words{\alphab A}$ which contains a finite or infinite number of words of any kind (i.e. finite, infinite or bi-infinite) on $\alphab{A}$. A \defi{language} is a word set which contains only finite words. 
A language $X$ on $\alphab{A}$ is \defi{prolongeable} if for all words $w\in X$, there exist two letters $a$ and $b$ in $\alphab{A}$ such that $awb\in X$. For all positive integers $N$, $X$ is \defi{$N$-prolongeable} if for all words $w$ of length $N$ in $X$, there exist a letter $a\in\alphab{A}$ such that $aw\in X$ and a letter $b\in\alphab{A}$ such that $wb\in X$. If a language is prolongeable then it is $N$-prolongeable for all positive integers $N$.

For all integers $n$, the set of the subwords of length $n$ of a word set $X$ is noted $\Nlan[n]{X}$. We put $\lan{X}$ for $\bigcup_{n=1}^{\infty} \Nlan[n]{X}$.

Let  $\alphab{A}$ and $\alphab{B}$ be two alphabets. Maps from $\alphab{A}$ to $\alphab{B}$ are called projections and can be extended by concatenation to maps from $\words{\alphab{A}}$ to $\words{\alphab{B}}$.

Let $X$ and $Y$ be two word sets. We write $X \succcurlyeq Y$  if there exists a projection $\varphi$ such that $\varphi(X) = Y$.
If we have both $X \succcurlyeq Y$ and $Y \succcurlyeq X$ then $X$ and $Y$ are said \defi{similar} (i.e. they are equal up to renaming their letters). We then write $X\similar Y$.

\subsection{Subshifts}
The \defi{shift map} $\sigma$ is the bijective map of $\alphab{A}^{\Z}$ to itself which shifts all the sequences to the left. Namely, for all $u\in \alphab{A}^{\Z}$, we have:
\begin{dmath*}
\sigma(u)_{n} = u_{n+1} \condition{for all $n \in \Z$.}
\end{dmath*}

A \defi{subshift} is a subset $X$ of $\alphab{A}^{\Z}$, thus a word set, which is both topologically closed and shift invariant, \ie such that $\sigma(X) = X$.
If $X$ is a subshift, then there exists a set $F \subseteq \alphab{A}^{\star}$ such that for every $u \in \alphab{A}^{\Z}$, the word $u$ belongs to $X$ if, and only if, $\mathcal{L}(u)\cap F = \emptyset$ \cite{Lind}. The set $F$ is called a \defi{forbidden language} for $X$.
Note that, since a subshift $X$ contains only bi-infinite words, its language $\lan{X}$ is always prolongeable. 

A subshift $X$ is said of \defi{finite type} (SFT) if it admits a finite forbidden language. In this case, one can assume without loss of of generality that all the words of the forbidden language have a same length \cite{Lind}. If they can be assumed to have length $\Stp+1$, we say that the SFT is \defi{$\Stp$-step}. Note that it is then also $N$-step for all $N\geq\Stp$.

If an $\Stp_{X}$-step SFT $X$ and an $\Stp_{Y}$-step SFT $Y$ are similar, then they are $\min\{\Stp_{X},\Stp_{Y}\}$-step SFTs.

\subsection{$N$-block presentations} 
Let $\alphab{A}$ be an alphabet and $N$ an integer. By defining the $N$-block alphabet $\toalph{\alphab{A}^{N}}$ as $\toalph{\alphab{A}^{N}} = \{[w]\condi w\in\alphab{A}^{N}\}$, the \defi{$N^{\mbox{\tiny th}}$ higher-block code} $\Phi_{N}$ is the map from $\words{\alphab A}$ to $\words{\toalph{\alphab{A}^{N}}}$ defined by:
\begin{dmath*}
(\Phi_{N}(u))_{i} = \left[u_{[i,i+N-1]}\right],
\end{dmath*}
for all words $u\in \words{\alphab A}$ and all positions $i$ of $u$ such that $i+N-1$ is still a position of  $u$. We use the notation $[w]$ to avoid confusion between the finite word $w$ and the corresponding letter $[w]$ of the block alphabet.

The \defi{$N$-block presentation} of $X$ is $\NBP{X}{N} = \Phi_{N}(X)$ which is a word set over $\Nlan{X}$. By abuse, we will say that a word set $Y$ is the $N$-block presentation of a word set $X$ if $Y$ is similar to $\NBP{X}{N}$.

The $N$-block presentation of a word set $X$ is well defined if $X$ contains only words of length greater or equal to $N$, a property which is assumed granted for all the word sets considered from now on.

For instance, the $3$-block presentation of $V \hiderel{=} \{b a b e c b a b a b e c b e d e d e c b\}$ is
\setlength{\tabcolsep}{-1pt}
\begin{center}
\begin{tabular}{lcccccccccccccccccccccc}
$\NBP{V}{3}\;$ & $=\;$ & $\bigg\{$ & $\colt{b}{a}{b}$ & $\colt{a}{b}{e}$ & $\colt{b}{e}{c}$ & $\colt{e}{c}{b}$ & $\colt{c}{b}{a}$ & $\colt{b}{a}{b}$ & $\colt{a}{b}{a}$ & $\colt{b}{a}{b}$ & $\colt{a}{b}{e}$ & $\colt{b}{e}{c}$ & $\colt{e}{c}{b}$ & $\colt{c}{b}{e}$ & $\colt{b}{e}{d}$ & $\colt{e}{d}{e}$ & $\colt{d}{e}{d}$ & $\colt{e}{d}{e} $ & $\colt{d}{e}{c}$ & $\colt{e}{c}{b}$ & $\bigg\}$\\[0.5cm]
&$\similar\;$ & $\{$ & $1$ & $0$ & $5$ & $9$ & $2$ & $1$ & $3$ & $1$ & $0$ & $5$ & $9$ & $4$ & $7$ & $\dix$ & $6$ & $\dix$ & $ 8$ & $9$ & $\}$
\end{tabular}
\end{center}

\begin{rem}\label{r:compo}
For all positive integers $M$ and $N$, the $M$-block presentation of the $N$-block presentation of a word set $X$ is similar to its $(N+M-1)$-block presentation: 
\begin{dmath*}
\NBP{(\NBP{X}{N})}{M} \similar \NBP{X}{N + M - 1}.
\end{dmath*}
\end{rem}

\begin{rem}[\cite{Lind}]\label{r:1step}
For all integers $N,L\ge1$, a subshift $X$ is an $L$-step SFT if and only if $\NBP{X}{N}$ is a $\max(1,L-N+1)$-step SFT.
\end{rem}
In particular, note that the finite-type property is preserved by higher-block presentation.

\begin{rem}\label{r:period}
Let $X$ be a subshift. If there exists a positive integer $N$ such that $\card{\Nlan[N]{X}} = \card{\Nlan[N+1]{X}}$ then $\NBP{X}{N+K}\similar \NBP{X}{N}$ for all integers $K\geq 0$ (actually $X$ is periodic, i.e. is made of finitely many periodic words).
\end{rem}



\section{Characterization and preimages of $N$-block presentations} \label{s:preimage}

In order to deal with word sets, we have to adapt the  definition of the equivalence relations used to characterize $N$-block presentations of single words in \cite{ecriture}. 

Let $X$ be a word set on an alphabet $\alphab{A}$. For all $k \in \{0,1, \ldots, N-1\}$, the relation $\nrelaeq{k}$ is defined for all symbols $a$ and $b$ by $a\nrelaeq{k}b$ if there exist a non-negative integer $n$, a sequence $\x{0}, \x{1}, \ldots, \x{n}$ of letters of $\alphab{A}$ and a sequence $\y{1}, \y{2}, \ldots, \y{n}$ of elements in $\{-1, 1\}$, which are such that
\begin{itemize}
\item $\x{0} = a$, $\x{n} = b$ and $\displaystyle\sum_{j=1}^{n}\y{j} = 0$,

\item for all integers $0< i\leq n$, we have
\begin{dmath*}
-k\hiderel{\leq}\sum_{j=1}^{i}\y{j}\hiderel{<}N-k  \mbox{ and }\left\{\begin{array}{llr}
\x{i-1}\x{i}\in\Nlan[2]{X} & \mbox{ if } \y{i} = & -1,\\
\x{i}\x{i-1}\in\Nlan[2]{X} & \mbox{ if } \y{i} = & 1.
\end{array}\right. 
\end{dmath*}
\end{itemize}

For all integers $0\leq k<N$, the relation $\nrelaeq{k}$ is an equivalence relation. We put $\parp{k}{N}$ for the corresponding partition of $\alphab{A}$. 

\begin{theo}[\cite{ecriture}]\label{t:carac}
Let $X$ be a word set and $\alphab A=\Nlan[1]X$.
$X$ is an $N$-block presentation if and only if for all pairs of symbols $(a,b)\in \alphab A$ with $a\neq b$, there exists an integer $0\leq k<N$ such that $a$ and $b$ are not in relation with $\nrelaeq{k}$. 
\end{theo}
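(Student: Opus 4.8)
The plan is to read the relation $\nrelaeq{k}$ as ``carrying the same symbol at coordinate $k$'' once $X$ is viewed as a set of length-$N$ blocks, and to turn this reading into an explicit preimage. Throughout I write $\alphab A=\Nlan[1]X$ and keep in mind that a sign $\y i=-1$ records an adjacency $\x{i-1}\x i\in\Nlan[2]X$, a sign $\y i=+1$ records $\x i\x{i-1}\in\Nlan[2]X$, and the partial sum $d_i=\sum_{j=1}^{i}\y j$ is a net horizontal displacement constrained to the window $-k\le d_i<N-k$. For the ``only if'' direction, suppose $X\similar\NBP{Y}{N}=\Phi_N(Y)$, so each letter of $X$ is a renaming of a block $[w]$ with $w\in\Nlan[N]{Y}$, and let $\kappa_k([w])=w_k$. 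The key single implication is $a\nrelaeq{k}b\Rightarrow\kappa_k(a)=\kappa_k(b)$: in $\Phi_N(Y)$ an adjacency $[v][v']\in\Nlan[2]X$ forces the overlap $v_{t+1}=v'_t$, so along a witnessing path the physical symbol of $Y$ sitting at coordinate $k$ of $\x0=a$ moves from coordinate $p_{i-1}$ to $p_i=p_{i-1}+\y i$, whence $p_i=k+d_i$; the window $-k\le d_i<N-k$ guarantees $0\le p_i\le N-1$, so that symbol never leaves the current block, and $\sum_j\y j=0$ brings it back to coordinate $k$ of $\x n=b$. Being one and the same symbol of $Y$, it gives $\kappa_k(a)=\kappa_k(b)$. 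Since two distinct blocks differ at some coordinate, the contrapositive yields, for every $a\neq b$, an index $k$ for which $a$ and $b$ are not $\nrelaeq{k}$-related.

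The ``if'' direction is the substantial part, and I would organise it around a shift-consistency lemma obtained by pure path surgery: for $0\le k\le N-2$, if $aa',cc'\in\Nlan[2]X$ then $a\nrelaeq{k+1}c$ iff $a'\nrelaeq{k}c'$. Given a $\nrelaeq{k+1}$-witness $a=\x0,\dots,\x n=c$, I prepend the step $a'\xrightarrow{+1}a$ and append $c\xrightarrow{-1}c'$; the new signed sequence still sums to $0$, and each new partial sum is $1+d_i$, which the shift carries from $-(k+1)\le d_i<N-(k+1)$ exactly into $-k\le 1+d_i<N-k$, witnessing $a'\nrelaeq{k}c'$. The reverse direction instead prepends $a\xrightarrow{-1}a'$ and appends $c'\xrightarrow{+1}c$, shifting partial sums by $-1$. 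Read through the induced partitions, this says that the map $\Sigma_k$ sending the $\nrelaeq{k+1}$-class of $a$ to the $\nrelaeq{k}$-class of any right-extension $a'$ is \emph{well defined} (the implication $a\nrelaeq{k+1}c\Rightarrow a'\nrelaeq{k}c'$ makes it independent of the chosen representative and extension) and \emph{injective on classes} (its converse).

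Using the maps $\Sigma_k$ I glue the partitions $\parp0N,\dots,\parp{N-1}N$ into a single alphabet $\alphab B$ by identifying each class with its $\Sigma_k$-image, and set $\theta_k(a)\in\alphab B$ to be the image of the $\nrelaeq{k}$-class of $a$; by construction the overlap identity $\theta_{k+1}(a)=\theta_k(a')$ holds whenever $aa'\in\Nlan[2]X$. I then define the preimage $Y=\Psi(X)$ by sending $x=\cdots a_i\cdots\in X$ to the word carrying $\theta_0(a_j)$ at interior position $j$ and carrying $\theta_1(a),\dots,\theta_{N-1}(a)$ on the $N-1$ trailing symbols of a rightmost letter $a$ (symmetrically on the left), so that the overlap identity makes $\Phi_N(\Psi(x))_i$ equal to the block $[\theta_0(a_i)\cdots\theta_{N-1}(a_i)]$, a renaming of $a_i$; this already gives $\Phi_N(Y)\succcurlyeq X$. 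For the reverse $X\succcurlyeq\Phi_N(Y)$, and hence $X\similar\Phi_N(Y)$, I must show $a\mapsto[\theta_0(a)\cdots\theta_{N-1}(a)]$ is injective: if $a\neq b$, the hypothesis supplies $k$ with $a,b$ not $\nrelaeq{k}$-related, their $\nrelaeq{k}$-classes differ, and the injectivity of $\Sigma_k$ forces $\theta_k(a)\neq\theta_k(b)$ in $\alphab B$, so the two blocks differ. I expect the main obstacle to be exactly this bookkeeping of the gluing, namely verifying that the $\Sigma_k$-identifications never collapse two distinct classes \emph{at the same coordinate} (where injectivity of $\Sigma_k$ is essential), together with treating the boundary symbols of finite and one-sided words and lifting the single-word construction of \cite{ecriture} to arbitrary word sets uniformly, so that $\Psi$ genuinely lands in a word set with $\Nlan[N]{\Psi(X)}$ in bijection with $\alphab A$.
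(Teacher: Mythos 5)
Your proof is correct, and its overall architecture is the same as the paper's: the ``only if'' direction is the identical symbol-tracking induction (the window $-k\le d_i<N-k$ keeping the tracked coordinate inside the block), and the ``if'' direction builds the maximal preimage out of the partitions $\parp{0}{N},\dots,\parp{N-1}{N}$; your quotient gluing of classes along the maps $\Sigma_k$ is exactly what the paper's coherent $N$-tuples $(p_0,\dots,p_{N-1})$ encode. Still, your treatment of the key lemma differs in two ways worth recording. First, you prove it self-containedly by path surgery, where the paper appeals to Lemma~2 of \cite{ecriture}, and yours is the two-sided (``iff'') version: the injectivity half is precisely what the paper uses without justification, both in its claim that two tuples of $\alphab{B}$ sharing one nonempty coordinate coincide everywhere, and in the injectivity of $a\mapsto[\proj{0}(a)\ldots\proj{N-1}(a)]$. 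In your formulation it is what rules out merging two distinct classes at the same coordinate, and the verification you flagged as the ``main obstacle'' is in fact immediate: in the gluing graph each node has at most one neighbour one level down (functionality of $\Sigma_k$) and at most one neighbour one level up (injectivity of $\Sigma_k$), so connected components are paths visiting each coordinate at most once. Second, your orientation of the extension maps, $\Sigma_k\colon\parp{k+1}{N}\to\parp{k}{N}$ induced by right-extension, is the one consistent with the overlap convention $ab\in\Nlan[2]{X}\Rightarrow a_i=b_{i-1}$; the paper's text instead asserts that the successor set of a class of $\parp{k}{N}$ lies in a single class of $\parp{k+1}{N}$, which is false as literally written: in the paper's own example, the successors of the class $\{0,3\}\in\parp{0}{2}$ are $\{1,5\}$, which meet two distinct classes of $\parp{1}{2}$. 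The roles of the successor and predecessor sets are swapped there, and your version is the one that makes the construction go through. Finally, one harmless slip on your side: what the overlap identity directly yields is that $\Phi_N(Y)$ is the letterwise image of $X$ under the block map $a\mapsto[\theta_0(a)\cdots\theta_{N-1}(a)]$, which in the paper's notation is $X\succcurlyeq\Phi_N(Y)$, while the injectivity of that map is what gives $\Phi_N(Y)\succcurlyeq X$; you interchanged the two labels, but since you establish both facts, the required similarity follows.
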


\begin{proof}
Let us first assume that $X$ is similar to an $N$-block presentation. All letters $a\in\alphab{A}$ are associated in a one-to-one way with a subword/block of length $N$, which we write $[a_{0}\ldots a_{N-1}]$. If we have $ab\in \Nlan[2]{X}$ then the corresponding blocks overlap. Namely we have $a_{i}=b_{i-1}$ for all $0<i<N$.
Let us assume that $a\nrelaeq{k}b$. There exists an integer $n$ and two sequences $\x{0}, \x{1}, \ldots, \x{n}$ and $\y{1}, \y{2}, \ldots, \y{n}$ which satisfy the definition above. It is straightforward to prove by induction that, for all $0< i\leq n$, 
\begin{dmath*}
\x{i}_{k+\sum_{j=1}^{i}\y{j}}=\x{0}_{k}.
\end{dmath*}
In particular, we get that $a\nrelaeq{k}b$ implies $a_{k} = b_{k}$. It follows that if $X$ is an $N$-block presentation and $a\neq b$, there exists at least an integer $0\leq k<N$ such that $a$ and $b$ are not in relation with $\nrelaeq{k}$.

Reciprocally, let us assume that for all symbols $a\neq b$ of $\alphab{A}$, there exists at least an integer $0\leq k<N$ such that $a$ and $b$ are not in relation with $\nrelaeq{k}$.
Let us define, for all $D\subset\alphab{A}^{2}$ and all $p\subset\alphab{A}$, 
$\slr{D}{p} = \{b\condi\exists a\in p, ab\in D\}$ and 
$\sll{D}{p} = \{b\condi\exists a\in p, ba\in D\}$.
An argument similar to that of \cite[Lemma 2]{ecriture} shows that, for all integers $0\leq k< N-1$ and all classes $p$ of $\parp{k}{N}$, if $\slr{\Nlan[2]{X}}{p}$  is not empty, there exists a unique class $q \in \parp{k+1}{N}$  such that $\slr{\Nlan[2]{X}}{p}\subset q$. Symmetrically, for all integers  all $0< k\leq N-1$ and all classes $p$ of $\parp{k}{N}$, if $\sll{\Nlan[2]{X}}{p}$ is not empty, there exists a unique class  $q \in \parp{k-1}{N}$ such that  $\sll{\Nlan[2]{X}}{p} \subset q$.

For all $0\leq\ell<N$, we define the alphabet $\alphab{B}_{\ell}$ as the set of $N$-uples $(p_{0}, p_{1}, \ldots, p_{N}) \in (\parp{0}{N}\cup\{\emptyset\})\times\ldots\times(\parp{N-1}{N}\cup\{\emptyset\})$ which are such that 
\begin{itemize}
\item $p_{\ell}\in\parp{\ell}{N}$, 
\item for all $0<i\leq \ell$,
\begin{dmath*}
p_{i-1} = \left\{\begin{array}{ll} 
\emptyset & \mbox{ if } p_{i}= \emptyset \mbox{ or }\sll{\Nlan[2]{X}}{p_{i}}=\emptyset,\\ 
\mbox{$q\in\parp{i-1}{N}$ s.t. $\sll{\Nlan[2]{X}}{p_{i}}\subset q$} &\mbox{otherwise,}
\end{array}\right.
\end{dmath*}
\item for all $\ell<i<N-1$,
\begin{dmath*}
p_{i+1} = \left\{\begin{array}{ll} 
\emptyset & \mbox{ if } p_{i}= \emptyset \mbox{ or }\slr{\Nlan[2]{X}}{p_{i}}=\emptyset,\\ 
\mbox{$q\in\parp{i+1}{N}$ s.t. $\slr{\Nlan[2]{X}}{p_{i}}\subset q$} &\mbox{otherwise.}
\end{array}\right.
\end{dmath*}
\end{itemize}
By construction, if there exists $0\leq i<j<N$ and $0\leq m<N$ such that $(p_{0}, \ldots, p_{N})\in\alphab{B}_{i}$, $(p'_{0}, \ldots, p'_{N})\in\alphab{B}_{j}$ and $p_{m} = p'_{m}$ then $p_{n} = p'_{n}\neq\emptyset$ for all $0\leq n<N$.
By setting $\alphab{B} = \bigcup_{i=0}^{N-1}\alphab{B}_{i}$, for all $a\in\alphab{A}$ and all $0< k\leq N-1$ there exists only one element $(p_{0}, \ldots, p_{N})\in\alphab{B}$ such that $a\in p_{k}$.
For all integers $0< k\leq N-1$, we define $\proj{k}$ as the letter-to-letter map which associates all symbols $a\in\alphab{A}$ with the unique element $(p_{0},\ldots, p_{N-1})\in\alphab{B}$ such that $a\in p_{k}$.
Under the current assumption, the map which associates to all letters $a\in\alphab{A}$, the $N$-block $[\proj{0}(a)\ldots\proj{N-1}(a)]$ is one-to-one.
Let us define the transformation $\projI$ from $X$ to $\words{\alphab A}$ by:
\begin{dmath*}
\projI(u) = \left\{\begin{array}{ll}
\proj{0}(u)\proj{1}(u_{|u|-1})\ldots\proj{N-1}(u_{|u|-1}) & \mbox{if $u\in\alphab{A}^{\star}$,}\\
\proj{0}(u) & \mbox{otherwise.}
\end{array}\right.
\end{dmath*}
By construction, for all $a$ and $b$ in $\alphab{A}$, if $ab\in\Nlan[2]{X}$ then $\proj{i}(a)=\proj{i-1}(b)$ for all $0<i<N$. At all positions $\ell$ of all words $u\in X$, we have that $(\projI(u))_{\ell+i} = \proj{i}(u_{\ell})$ for all $0\leq i<N$. It follows that $X$ is similar to $\NBP{(\projI(X))}{N}$.
\end{proof}
We emphasize that deciding if a given word set is a  $N$-block presentation only relies on its set of subwords of length $2$. In the case where a word set  is actually an $N$-block presentation, its set of subwords of length $2$ suffices for determining a projection leading to one of its preimages.

\begin{defn} Let $X$ be a word set and $N$ a non negative integer. Any word set $Y$ such that $\NBP{Y}{N}$ is similar to $X$ is an $N$-preimage of $X$. 
\end{defn}

\begin{cor}[\cite{ecriture}]\label{c:maxpre}
Let $X$ and $Y$ be two word sets. If $Y$ is an $N$-preimage of $X$, then 
\begin{dmath*}
Y \preccurlyeq \projI(X),
\end{dmath*}
where $\projI$ is the transformation defined in the proof of Theorem \ref{t:carac}.

In other words, there exists a projection $\psi$ from $\Nlan[1]{\projI(X)}$ to $\Nlan[1]{Y}$ such that $\psi(\projI(X)) = Y$.
\end{cor}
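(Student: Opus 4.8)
The plan is to show that any $N$-preimage $Y$ of $X$ factors through the canonical preimage $\projI(X)$ constructed in the proof of Theorem~\ref{t:carac}. Since $Y$ is an $N$-preimage of $X$, by definition we have $\NBP{Y}{N}\similar X$. The key observation is that the construction of $\projI$ in Theorem~\ref{t:carac} depends only on $\Nlan[2]{X}$, i.e. on the set of length-$2$ subwords of $X$, which encodes exactly the overlap structure of the blocks. So the strategy is to compare the data that determines a letter of $\projI(X)$ with the data that determines the corresponding block of $Y$, and to exhibit a projection $\psi$ from $\Nlan[1]{\projI(X)}$ onto $\Nlan[1]{Y}$ realizing $\psi(\projI(X))=Y$.

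First I would fix, for each letter $c$ of $Y$, its associated block. Writing $\alphab A=\Nlan[1]X$, the hypothesis $\NBP{Y}{N}\similar X$ means there is a bijection between the letters of $X$ and the blocks of length $N$ occurring in $Y$; so each letter $a\in\alphab A$ corresponds to a word $b_0(a)\ldots b_{N-1}(a)$ over $\Nlan[1]{Y}$, and the overlap relation holds: $ab\in\Nlan[2]{X}$ forces $b_i(a)=b_{i-1}(b)$ for $0<i<N$, exactly as in the forward direction of Theorem~\ref{t:carac}. The central step is then to prove that the maps $b_k\colon\alphab A\to\Nlan[1]{Y}$ are \emph{constant on the classes of $\parp{k}{N}$}. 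Concretely, if $a\nrelaeq{k}b$ then the induction already carried out in the forward direction of Theorem~\ref{t:carac}, applied to the block presentation $Y$ rather than to $X$ itself, yields $b_k(a)=b_k(b)$; the sequences $\x{i}$ and $\y{i}$ witnessing the relation let one track the $k$-th coordinate of the blocks and show it is preserved along the chain. This is the same computation as the displayed identity $\x{i}_{k+\sum\y{j}}=\x{0}_{k}$ in that proof, so I would invoke it directly.

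Once $b_k$ is known to factor through $\parp{k}{N}$, I would define $\psi$ on a generic letter of $\projI(X)$. Recall that a letter of $\projI(X)$ is, by construction, an $N$-tuple $(p_0,\ldots,p_{N-1})$ of classes (with $p_k\in\parp{k}{N}$), arising as $\projI(u)_{\ell+i}=\proj{i}(u_\ell)$, so the letter at a given position records the full family of class-memberships of the underlying letter $u_\ell\in\alphab A$. Since each $b_k$ is constant on $\parp{k}{N}$-classes, the value $b_k(a)$ depends only on the class $p_k$, so I can set $\psi\big((p_0,\ldots,p_{N-1})\big)=b_k(a)$ for the appropriate coordinate, where $a$ is any representative with $a\in p_k$; the factorization property guarantees this is well defined. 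The main obstacle, and the step deserving the most care, is checking consistency: a single letter of $\projI(X)$ packages all $N$ coordinates simultaneously, whereas the block $b_0(a)\ldots b_{N-1}(a)$ of $Y$ spans $N$ consecutive positions, so I must verify that the letter-to-letter map $\psi$ reconstructs $Y$ position by position rather than block by block. Here the overlap identities $b_i(a)=b_{i-1}(b)$ are exactly what makes the per-position reading coherent, and one reads off $\psi(\projI(X))=Y$ by comparing, at each position $\ell$ of each word $u\in X$, the letter $\projI(u)_{\ell}$ against the single symbol of $Y$ sitting at that position. Finally I would confirm that $\psi$ is surjective onto $\Nlan[1]{Y}$, since every letter of $Y$ appears as some coordinate $b_k(a)$, giving $Y\preccurlyeq\projI(X)$ as claimed.
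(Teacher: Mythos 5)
Your proposal is correct and takes essentially the approach the paper itself points to: the paper's proof of Corollary~\ref{c:maxpre} is only a deferral to \cite[Corollary 2]{ecriture} ``relying on the second part of the proof of Theorem~\ref{t:carac}'', and your argument supplies exactly that construction --- using the forward-direction induction of Theorem~\ref{t:carac} (the identity $\x{i}_{k+\sum_{j=1}^{i}\y{j}}=\x{0}_{k}$, applied to $X$ viewed as a presentation of $Y$) to show each coordinate map $b_k$ is constant on the classes of $\parp{k}{N}$, and then defining $\psi$ coordinatewise on the tuples that constitute $\Nlan[1]{\projI(X)}$, with the overlap identities $b_i(a)=b_{i-1}(b)$ making the definition independent of the chosen coordinate and yielding $\psi(\projI(X))=Y$ position by position. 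One small imprecision of wording: a letter $\proj{i}(u_{\ell})$ does not record the class-memberships of the single letter $u_{\ell}$ in all partitions (its $j$-th coordinate is the class of the shifted letter $u_{\ell+i-j}$ when that position exists), but your subsequent consistency discussion handles the structure correctly, so the argument stands.
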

Corollary \ref{c:maxpre} is proved by using  an argument similar to that of \cite[Corollary 2]{ecriture}, which essentially relies on the second part of the proof of Theorem \ref{t:carac}.

This corollary ensures that any word set with an $N$-block presentation similar to $X$ can be obtained by projection from $\projI(X)$ or from any word set similar to $\projI(X)$. We will (improperly) refer to any word set similar to $\projI(X)$ as ``the'' \defi{maximal $N$-preimage} of $X$, which will be noted $\NBP{X}{-N}$.

\begin{prop}\label{p:proj}
Let $N$ be a positive integer and $X$ and $Y$ be two word sets which are $N$-block presentations. If $X\succcurlyeq Y$ then $\NBP{X}{-N}\succcurlyeq \NBP{Y}{-N}$.
\end{prop}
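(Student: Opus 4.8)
The plan is to construct the required projection directly at the level of the alphabets of the two maximal preimages, by transporting the given projection through the equivalence relations $\nrelaeq{k}$. Write $\varphi$ for the projection with $\varphi(X)=Y$; thus $\varphi$ maps $\Nlan[1]{X}$ onto $\Nlan[1]{Y}$ and, applied letterwise, $\Nlan[2]{X}$ onto $\Nlan[2]{Y}$. Throughout I would use the maps $\proj{k}$ and the transformation $\projI$ built in the proof of Theorem~\ref{t:carac} for $X$, and their analogues $\projBis{k}$ (and the corresponding transformation) for $Y$; recall $\NBP{X}{-N}=\projI(X)$ and that $\NBP{Y}{-N}$ is the image of $Y$ under the $Y$-analogue of $\projI$.

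First I would prove the key compatibility: for every $0\le k<N$, $a\nrelaeq{k}b$ implies $\varphi(a)\nrelaeq[Y]{k}\varphi(b)$. This is a transport of structure: a pair of sequences $\x{0},\dots,\x{n}$ and $\y{1},\dots,\y{n}$ witnessing $a\nrelaeq{k}b$ in $X$ yields, after applying $\varphi$ to each $\x{i}$ and keeping the same signs $\y{i}$, a valid witness for $\varphi(a)\nrelaeq[Y]{k}\varphi(b)$, since the partial-sum constraints are untouched and each required $2$-block $\x{i-1}\x{i}$ (resp. $\x{i}\x{i-1}$) is sent into $\Nlan[2]{Y}$. Consequently $\varphi$ maps each class of $\parp{k}{N}$ into a single class of $\parp[Y]{k}{N}$, and the follow-relations are compatible: if $p\in\parp{k}{N}$ has $\slr{\Nlan[2]{X}}{p}\neq\emptyset$ and $q$ is the $\parp[Y]{k}{N}$-class containing $\varphi(p)$, then $\varphi(\slr{\Nlan[2]{X}}{p})$ is nonempty and sits inside the unique $\parp[Y]{k+1}{N}$-class prescribed for $q$ by $Y$; symmetrically for $\sll{\Nlan[2]{X}}{p}$.

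Next I would define the projection on the alphabet of $\NBP{X}{-N}$ by setting $\psi(\proj{\ell}(a)):=\projBis{\ell}(\varphi(a))$ for every letter $a$ of $X$ and every slot $\ell$. The image is automatically a letter of $\NBP{Y}{-N}$, so the content is well-definedness, and this is where I expect the main obstacle. The difficulty is genuine: because $\varphi$ may merge several classes of $\parp{k}{N}$ into one class of $\parp[Y]{k}{N}$, a follow-set that is empty over $X$ can become nonempty over $Y$, so $\projBis{\ell}(\varphi(a))$ may have strictly larger nonempty support than $\proj{\ell}(a)$. A naive slot-by-slot pushforward therefore fails. I would resolve this by observing that along the nonempty support of a letter $(p_0,\dots,p_{N-1})$ of $\NBP{X}{-N}$ consecutive slots are linked by a follow-relation of $X$, which by the previous step is carried to the corresponding follow-relation of $Y$; an induction along the support then shows that two descriptions $\proj{i}(a)=\proj{j}(a')$ of the same letter give $\projBis{i}(\varphi(a))$ and $\projBis{j}(\varphi(a'))$ agreeing on a common nonempty slot, whence they are equal by the coherence property of the $Y$-tuples (a letter of $\NBP{Y}{-N}$ is determined by any one of its nonempty slots). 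Thus $\psi$ is well defined.

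Finally I would conclude using the structural identity $(\projI(u))_{\ell+i}=\proj{i}(u_\ell)$ valid for all $u\in X$, together with its $Y$-analogue. Since $\varphi(u)_\ell=\varphi(u_\ell)$, the definition of $\psi$ gives $(\psi(\projI(u)))_{\ell+i}=\psi(\proj{i}(u_\ell))=\projBis{i}(\varphi(u_\ell))$, which is exactly the $(\ell+i)$-th letter of the image of $\varphi(u)$ under the $Y$-transformation; the finite-word tails are handled identically via the slots $i\ge 1$ of the last letter. Hence $\psi(\NBP{X}{-N})=\psi(\projI(X))$ equals the image of $\varphi(X)=Y$ under the $Y$-analogue of $\projI$, namely $\NBP{Y}{-N}$. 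This establishes $\NBP{X}{-N}\succcurlyeq\NBP{Y}{-N}$.
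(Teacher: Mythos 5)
Your proof is correct and follows essentially the same route as the paper's own argument: push the relations $\nrelaeq{k}$ forward through $\varphi$ (so each class of $\parp{k}{N}$ maps into a class of $\parp[Y]{k}{N}$), define the projection slot-wise by $\proj{\ell}(a)\mapsto\projBis{\ell}(\varphi(a))$, and conclude via the identity $(\projI(u))_{\ell+i}=\proj{i}(u_{\ell})$. If anything, you are more thorough than the paper on the one delicate point, namely well-definedness when the same letter of $\NBP{X}{-N}$ arises from two different slots $i\neq j$: the paper's proof only checks agreement for a fixed slot $k$ and leaves the cross-slot coherence (your induction along the nonempty support of the tuple) implicit.
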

\begin{proof}
Let us assume that there is a projection $\projZ$ such that $\projZ(X) = Y$. Since if $cd\in\Nlan[2]{X}$ then $\projZ(c)\projZ(d)\in\Nlan[2]{Y}$, for all letters $a$ and $b$ in $\Nlan[1]{X}$ and for all $0\leq k<N$, if $a \nrelaeq{k} b$ then $\projZ(a) \nrelaeq[Y]{k} \projZ(b)$. Under the notations of the proof of Theorem \ref{t:carac}, we have that, for all letters $a$ and $b$ in $\Nlan[1]{X}$ and for all $0\leq k<N$, if $\proj{k}(a)=\proj{k}(b)$ then $\projBis{k}(\projZ(a))=\projBis{k}(\projZ(b))$. The projection $\projZ'$ from $\Nlan[1]{\NBP{X}{-N}}$ to $\Nlan[1]{\NBP{Y}{-N}}$ which associates to all letters $c\in\Nlan[1]{\NBP{X}{-N}}$, $\projZ'(c) = \proj{k}(\projZ(a))$ for any $a\in(\proj{k})^{-1}(c)$, is thus well defined and such that $\projZ'(\NBP{X}{-N}) =\NBP{Y}{-N}$.
\end{proof}

\section{Composing maximal $N$-preimages}\label{s:composing}
Remark \ref{r:compo} states that, for all positive integers $M$ and $N$, the $M$-block presentation of the $N$-block presentation of a word set is similar to its $(N+M-1)$-block presentation. We shall see that the situation is not that simple for the $N$-preimages.

\begin{rem}\label{p:remonte}
Let $N>1$ and $M>1$ be two integers and $X$ be a word set which is a  $(N + M - 1)$-block presentation. The maximal $N$-preimage of $X$ is not always an $M$-block presentation.
\end{rem}

In order to illustrate Remark \ref{p:remonte}, let us consider the example where $M=N=2$ and $X  = \{1 0 5 9 2 1 3 1 0 5 9 4 7 \dix 6 \dix  8 9\} \similar \NBP{V}{3}$ with $V \hiderel{=} \{b a b e c b a b a b e c b e d e d e c b\}$.

We have
\begin{dmath*}
\Nlan[2]{X} = \{10, 05, 59, 92, 21, 13, 31, 94, 47, 7\dix, \dix 6, 6\dix, \dix 8, 89 \},
\end{dmath*}
from which we get
\begin{dgroup*}
\begin{dmath*}
\parp[X]{0}{2} = \{\{0, 3\}, \{2, 4\}, \{6, 8\}, \{1\}, \{5\}, \{7\}, \{9\},\{\dix\} \},
\end{dmath*}
\begin{dmath*}
\parp[X]{1}{2} = \{\{5, 8\}, \{6, 7\}, \{2, 3\}, \{0\}, \{1\}, \{4\}, \{9\},\{\dix\} \}.
\end{dmath*}
\end{dgroup*}
By setting
\begin{dmath*}
 \begin{array}{cclr}
   & & \parp[X]{0}{2} & \parp[X]{1}{2} \\
 A & = & (\{0, 3\},  &  \{1\}) \\
 B & = & (\{2, 4\}, &  \{9\}) \\
 C & = & (\{6, 8\}, &   \{\dix\}) \\
 D & = & (\{1\}, & \{2, 3\}) \\
 E & = & (\{5\}, &  \{0\}) \\
 F & = & (\{7\}, &   \{4\} ) \\
 G & = & (\{9\}, &  \{5, 8\} ) \\
 H & = & (\{\dix\}, &  \{6, 7\}   )\\
\end{array}
\end{dmath*}

the maximal $2$-preimage of $X$ is
\begin{dmath*}
Y = \{D A E G B D A D A E G B F H C H C G B\}.
\end{dmath*}

Let us check if $Y$ is itself a $2$-block presentation.
We have
\begin{dmath*}
\Nlan[2]{Y} = \{DA, AE, EG, GB, BD, AD, BF, FH, HC, CH, CG \}
\end{dmath*}
thus
\begin{dgroup*}
\begin{dmath*}
\parp[Y]{0}{2} = \{\{{E}, D, {F}\}, \{H, G\}, \{A\}, \{B\}, \{C\} \}
\end{dmath*}
\begin{dmath*}
\parp[Y]{1}{2} = \{\{ {E}, C, {F}\}, \{A, B\}, \{D\}, \{G\}, \{H\} \}.
\end{dmath*}
\end{dgroup*}
Since the letters $E$ and $F$ belong to the same class in both partitions $\parp[Y]{0}{2}$ and $\parp[Y]{1}{2}$, Theorem \ref{t:carac} ensures that $Y$ is not a $2$-block presentation. 

\begin{prop}
Let $N$ and  $M > 1$ be two positive integers, $X$ be an $(N + M -1)$-block presentation, $Y = \NBP{X}{-N-M+1}$ its maximal $(N + M - 1)$-preimage and $Z = \NBP{X}{-N}$ its maximal $N$-preimage.
If $Z$ is an $M$-block presentation then $Y$ is its maximal $M$-preimage.
 \end{prop}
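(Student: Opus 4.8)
The plan is to prove the statement by showing directly that $Y$ is similar to $W := \NBP{Z}{-M}$, the maximal $M$-preimage of $Z$, establishing the two projections $Y\succcurlyeq W$ and $W\succcurlyeq Y$ separately. Throughout I would only use two ingredients already at hand: the composition rule of Remark \ref{r:compo}, and the maximality of Corollary \ref{c:maxpre}, which says that any $K$-preimage of a word set projects from its maximal $K$-preimage. I would also record at the outset that $Y$ and $Z$ are genuine preimages in the sense that $\NBP{Y}{N+M-1}\similar X$ and $\NBP{Z}{N}\similar X$, since $Y$ and $Z$ are maximal preimages by hypothesis.

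For the first direction I would argue $Y\succcurlyeq W$. As $Z$ is an $M$-block presentation with maximal $M$-preimage $W$, we have $\NBP{W}{M}\similar Z$. Combining this with $\NBP{Z}{N}\similar X$ and Remark \ref{r:compo} gives $\NBP{(\NBP{W}{M})}{N}\similar \NBP{W}{N+M-1}$, and the left-hand side is similar to $\NBP{Z}{N}\similar X$. Hence $W$ is an $(N+M-1)$-preimage of $X$, and Corollary \ref{c:maxpre} applied to the maximal $(N+M-1)$-preimage $Y=\NBP{X}{-N-M+1}$ yields $Y\succcurlyeq W$.

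For the second direction I would reduce $W\succcurlyeq Y$ to showing that $Y$ is an $M$-preimage of $Z$, i.e. $\NBP{Y}{M}\similar Z$, after which Corollary \ref{c:maxpre} applied to $W=\NBP{Z}{-M}$ gives $W\succcurlyeq Y$. One projection comes for free: since $\NBP{(\NBP{Y}{M})}{N}\similar \NBP{Y}{N+M-1}\similar X$ by Remark \ref{r:compo}, the word set $\NBP{Y}{M}$ is an $N$-preimage of $X$, so Corollary \ref{c:maxpre} applied to $Z=\NBP{X}{-N}$ gives $Z\succcurlyeq \NBP{Y}{M}$. For the reverse projection I would invoke the elementary commutation of a projection with the higher-block code, $\Phi_{M}\circ\varphi=[\varphi]\circ\Phi_{M}$, where $[\varphi]$ acts letterwise on blocks: applying it to the projection $\varphi$ realizing $Y\succcurlyeq W$ from the first direction gives $[\varphi](\NBP{Y}{M})=\NBP{W}{M}\similar Z$, whence $\NBP{Y}{M}\succcurlyeq Z$. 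The two projections together give $\NBP{Y}{M}\similar Z$.

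Combining both directions, $Y\succcurlyeq W$ and $W\succcurlyeq Y$, so $Y\similar W=\NBP{Z}{-M}$, which is exactly the claim that $Y$ is the maximal $M$-preimage of $Z$. The step I would treat most carefully, and expect to be the only genuine obstacle, is the reverse projection $\NBP{Y}{M}\succcurlyeq Z$ in the second direction: every other step is a mechanical combination of Remark \ref{r:compo} and Corollary \ref{c:maxpre} that produces inequalities in a single fixed direction, and it is precisely the commutation of projections with $\Phi_{M}$ that lets us transport the relation $Y\succcurlyeq W$ upward through the higher-block code and thereby close the loop; without it one is left only with $Z\succcurlyeq\NBP{Y}{M}$ and cannot conclude similarity.
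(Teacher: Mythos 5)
Your proposal is correct, and its two ``cheap'' directions are exactly the ones in the paper's proof: $Z\succcurlyeq\NBP{Y}{M}$ and $Y\succcurlyeq\NBP{Z}{-M}$, each obtained by combining Remark~\ref{r:compo} with Corollary~\ref{c:maxpre}. Where you genuinely diverge is in producing the remaining projection $\NBP{Z}{-M}\succcurlyeq Y$. The paper gets it by applying Proposition~\ref{p:proj} to the projection $Z\succcurlyeq\NBP{Y}{M}$ (both sides being $M$-block presentations), i.e.\ by \emph{descending} a projection to the corresponding maximal $M$-preimages; you instead \emph{ascend} the projection $\varphi$ realizing $Y\succcurlyeq\NBP{Z}{-M}$ through the block code, via the commutation identity $\Phi_{M}\circ\varphi=[\varphi]\circ\Phi_{M}$, to get $\NBP{Y}{M}\succcurlyeq\NBP{\pars{\NBP{Z}{-M}}}{M}\similar Z$; together with $Z\succcurlyeq\NBP{Y}{M}$ this gives $\NBP{Y}{M}\similar Z$, so $Y$ is literally an $M$-preimage of $Z$ and Corollary~\ref{c:maxpre} closes the loop. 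Your route buys self-containedness: it bypasses Proposition~\ref{p:proj}, whose own proof rests on the machinery inside the proof of Theorem~\ref{t:carac}, replacing it by a one-line commutation check that the paper never states explicitly, and it yields the slightly stronger intermediate fact $\NBP{Y}{M}\similar Z$ rather than only $Z\succcurlyeq\NBP{Y}{M}$. The paper's route is shorter given that Proposition~\ref{p:proj} is already on the shelf, and that proposition earns its keep independently since it is reused later (in the proof of Proposition~\ref{p:conj}). Both arguments invoke the hypothesis that $Z$ is an $M$-block presentation exactly where it is indispensable, namely for the maximal $M$-preimage $\NBP{Z}{-M}$ to exist.
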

\begin{proof}
From Remark \ref{r:compo}, the word set $X$ is an $N$-block presentation. 
Since $\NBP{(\NBP{Y}{M})}{N}\similar X$ and $Z$ is the maximal $N$-preimage of $X$, Corollary \ref{c:maxpre} ensures that $Z\succcurlyeq \NBP{Y}{M}$. If moreover $Z$ is an $M$-block presentation then Proposition \ref{p:proj} gives us that $\NBP{Z}{-M}\succcurlyeq Y$.

On the other hand, we have that $\NBP{(\NBP{Z}{-M})}{N+M-1}\similar X$ (Remark \ref{r:compo}). Since $Y$ is the maximal $(N+M-1)$-preimage of $X$, Corollary \ref{c:maxpre} gives us that $Y\succcurlyeq \NBP{Z}{-M}$ which ends the proof.
\end{proof}

\section{Characterizing  maximal $N$-preimages}\label{s:charac}
\begin{defn}Let $X$ be a word set. For all $a\in\alphab{A}$ and all integers $N>1$, the  graph of order $N$ of $a$ with regard to $X$ is the undirected graph $\ngra{a}{N} = (\mathcal{V}, \mathcal{E})$ where
\begin{itemize}
\item $\mathcal{V} = \{(i,u) \condi 0 \leq i < N, u \in \Nlan[N]{X}, u_{i} = a\}$. In plain English, vertices of $\ngra{a}{N}$ are pointed words of length $N$ which occur in $X$, with the letter $a$ at the pointed position;
\item $\mathcal{E} = \left\{\{(i,w_{[0,N-1]}), (i-1, w_{[1,N]})\} \condi 0 < i < N, w \in \Nlan[N+1]{X},w_{i} = a\right\}$. Edges are of the form $\{(i,u),(i-1,v)\}$, where $u$ and $v$ are respectively prefix and suffix of a word of $\Nlan[N+1]{X}$ in which $a$ occurs at the $i^{\mbox{\tiny th}}$ position.
\end{itemize}
\end{defn}

By construction, the graph $\ngra{a}{N}$ is $N$-partite (Figure \ref{f:fig1}).

\begin{figure}[!hb]
\centering{\includegraphics{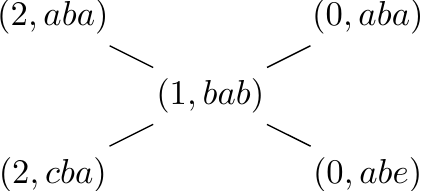}}
\caption{The graph $\ngra[V]{a}{3}$ with $V = \{b a b e c b a b a b e c b e d e d e c b\}$.}
\label{f:fig1}
\end{figure}

A letter $a\in\Nlan[1]{X}$ is \defi{$N$-connected} with regard to  $X$ if the graph $\ngra{a}{N}$ is connected (in the usual sense). For instance, $a$ is $3$-connected with regard to $V = \{b a b e c b a b a b e c b e d e d e c b\}$ (Figure \ref{f:fig1}). \\

\begin{lemma}\label{l:graph}
Let $X$ be a word set. For all letters $a\in\Nlan[1]{X}$, we have that
\begin{enumerate}
\item $\ngra{a}{1}$ is connected;
\item if $\ngra{a}{N}$ is connected then, for all positive integers $M\leq N$, $\ngra{a}{M}$ is connected.
\end{enumerate}
\end{lemma}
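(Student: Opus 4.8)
I prove the two claims in turn, the first being immediate and the second by a one-step induction.

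The first claim is immediate: the graph $\ngra{a}{1}$ has vertex set $\V=\{(0,a)\}$, since a word $u\in\Nlan[1]{X}$ with $u_{0}=a$ can only be $a$ itself, and it has no edge because the condition $0<i<1$ defining $\E$ is never met; a one-vertex graph is connected. For the second claim I would induct on $N-M$, so that it suffices to prove the single step: if $\ngra{a}{N}$ is connected, with $N\geq 2$, then $\ngra{a}{N-1}$ is connected. The plan is to project the graph of order $N$ onto the graph of order $N-1$ by deleting one letter from each pointed word. Concretely, I define a vertex map $\pi$ by $\pi(i,u)=(i,u_{[0,N-2]})$ when $0\leq i<N-1$ (drop the last letter, keep the pointed position) and $\pi(N-1,u)=(N-2,u_{[1,N-1]})$ (drop the first letter, shift the pointed position down by one). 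In both cases $a$ stays at the new pointed position, so the image is a legitimate vertex of $\ngra{a}{N-1}$.

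The key verification is that $\pi$ carries the incidence structure of $\ngra{a}{N}$ into that of $\ngra{a}{N-1}$. Take an edge $\{(i,w_{[0,N-1]}),(i-1,w_{[1,N]})\}$ arising from some $w\in\Nlan[N+1]{X}$ with $w_{i}=a$ and $0<i<N$. When $i\leq N-2$ both endpoints keep their position, and their images $(i,w_{[0,N-2]})$ and $(i-1,w_{[1,N-1]})$ form exactly the edge of $\ngra{a}{N-1}$ produced by the word $w_{[0,N-1]}\in\Nlan[N]{X}$ at its position $i$ (here one uses $0<i<N-1$); when $i=N-1$ a short computation shows both endpoints are sent to the single vertex $(N-2,w_{[1,N-1]})$, so the edge collapses. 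Hence pushing any path of $\ngra{a}{N}$ forward along $\pi$ yields a walk of $\ngra{a}{N-1}$ in which consecutive vertices coincide or are adjacent; since $\ngra{a}{N}$ is connected, the whole image $\pi(\V)$ lies in a single connected component of $\ngra{a}{N-1}$.

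It remains to account for the vertices of $\ngra{a}{N-1}$ not met by $\pi$, and this boundary bookkeeping is the only real difficulty. Because every word of $X$ has length at least $N$, each $q\in\Nlan[N-1]{X}$ extends by one letter to the right or to the left. If $q$ extends to the right, then any vertex $(j,q)$ already lies in $\pi(\V)$, since $\pi(j,qb)=(j,q)$ for the extension $qb$; if $q$ extends only to the left, the vertices $(N-2,q)$ are still in $\pi(\V)$, while for $j<N-2$ the left extension $cq\in\Nlan[N]{X}$ yields an edge of $\ngra{a}{N-1}$ joining $(j,q)$ to $(j+1,(cq)_{[0,N-2]})$, whose word extends to the right and is therefore in $\pi(\V)$. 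Thus every vertex of $\ngra{a}{N-1}$ is in $\pi(\V)$ or adjacent to it, so $\ngra{a}{N-1}$ is connected, and iterating the step from $N$ down to $M$ finishes the proof. I expect this last paragraph to be the main obstacle: it is forced by the left--right asymmetry of the deletion map and by the possibility of non-extendable factors in a general word set. For prolongeable languages, and in particular for subshifts, the difficulty evaporates, since then $\pi$ is plainly surjective and the connectedness of the image is all that is needed.
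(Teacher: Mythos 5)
Your proof is correct, including the boundary bookkeeping, but it resolves the crux differently from the paper. Both arguments share the sliding-window mechanism for pushing connectivity downward, yet the paper works \emph{directly} from $N$ to $M$ with no induction: it first observes that an edge $\{(i,u),(i-1,v)\}$ of $\ngra{a}{N}$, coming from the word $t=uv_{N-1}\in\Nlan[N+1]{X}$, projects not to a single edge but to a whole path of $\ngra{a}{M}$ running through all the $M$-windows of $t$ that contain the pointed occurrence of $a$; and then, where you prove that the image of your map $\pi$ is a dominating set, the paper instead \emph{lifts}: using the standing assumption that all words of $X$ have length at least $N$, any vertex $(j,w)$ of $\ngra{a}{M}$ is realized as a window of some vertex $(i,u)$ of $\ngra{a}{N}$ with $u_{[i-j,i-j+M-1]}=w$. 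Two vertices of $\ngra{a}{M}$ are then connected by lifting both, joining the lifts by a path in $\ngra{a}{N}$, and projecting that path down. The lifting is exactly what makes your ``main obstacle'' disappear: there are no vertices missed by a projection to account for, because every vertex of the small graph sits inside a vertex of the big one by construction; the length hypothesis is spent on lifting rather than, as in your proof, on one-sided extendability of $(N-1)$-subwords. Conversely, your route buys a cleaner incidence statement (a single-valued map under which edges go to edges or collapse) at the price of the case split at the right boundary and the induction on $N-M$; when you iterate the step down to order $K<N$, note that the extendability you need is still guaranteed because words of $X$ have length at least $N\geq K$, so the induction is sound.
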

\begin{proof}
The first assertion is plain since $\ngra{a}{1}$ contains only the vertex $(0,a)$.

In order to prove Assertion 2, let us first remark that if there is an edge $\{(i, u),(i-1, v)\}$ in $\ngra{a}{N}$ then, by setting $t=uv_{N-1}$, for all $\max\{i-M-1, 0\}\leq\ell\leq\min\{i,N-M\}$, $(i-\ell, t_{[\ell,\ell+M-1]})$ and $(i-\ell-1, t_{[\ell+1,\ell+M]})$ are adjacent vertices of $\ngra{a}{M}$.  Reciprocally, since we made the implicit assumption that all words of $X$ are longer than $N$, for all pairs of vertices $(j, w)$ and $(j', w')$ of $\ngra{a}{M}$, there exists two vertices $(i, u)$ and  $(i', u')$  of $\ngra{a}{N}$ which are such that $u_{[i-j, i-j+M-1]} = w$ and $u'_{[i'-j', i'-j'+M-1]} = w'$. If the graph  $\ngra{a}{N}$ is connected then there exists a  path between $(i, u)$ and $(i', u')$ which then implies the existence of a path between $(j, w)$ and $(j', w')$ in  $\ngra{a}{M}$.
\end{proof}

\begin{lemma}\label{l:graphbis}
Let $X$ be a word set and $a$ be a letter in $\Nlan[1]{X}$. If there exists an integer $N\geq 2$ such that 
\begin{itemize}
\item $\ngra{a}{N}$ is not connected,
\item $\ngra{a}{N-1}$ is connected and
\item $\lan{X}$ is $(N-1)$-prolongeable,
\end{itemize}
then there exists a word $w\in\Nlan[1]{X}^{N+1}$ such that $w_{[0,N-1]}\in\Nlan[N]{X}$, $w_{[1,N]}\in\Nlan[N]{X}$ and $w\not\in\Nlan[N+1]{X}$.
\end{lemma}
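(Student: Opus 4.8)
The plan is to argue by contradiction. Suppose no such word $w$ exists; this is equivalent to saying that $X$ enjoys the following \emph{gluing property}: whenever two words $u,v\in\Nlan[N]{X}$ overlap, in the sense that $u_{[1,N-1]}=v_{[0,N-2]}$, the length-$(N+1)$ word $u_{0}u_{1}\cdots u_{N-1}v_{N-1}$ (whose two length-$N$ factors are exactly $u$ and $v$) already lies in $\Nlan[N+1]{X}$. Under this assumption I would prove that $\ngra{a}{N}$ is connected, contradicting the first hypothesis and thereby forcing the existence of $w$.

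To relate the two graphs I would use the notion of a \emph{lift}: a vertex $(i,u)$ of $\ngra{a}{N}$ lifts the vertex $(i,u_{[0,N-2]})$ of $\ngra{a}{N-1}$ when $i<N-1$ (a right extension) and lifts $(i-1,u_{[1,N-1]})$ when $i>0$ (a left extension). Since $\lan{X}$ is $(N-1)$-prolongeable, every vertex $(p,x)$ of $\ngra{a}{N-1}$ admits at least one left lift $(p+1,\alpha x)$ and at least one right lift $(p,x\beta)$, with $\alpha x,x\beta\in\Nlan[N]{X}$. The first key step is that all lifts of a single vertex $(p,x)$ lie in one connected component of $\ngra{a}{N}$: given any left lift $(p+1,\alpha x)$ and any right lift $(p,x\beta)$, the gluing property puts $\alpha x\beta$ in $\Nlan[N+1]{X}$, and since the pointed letter $a$ sits at position $p+1$ in $\alpha x\beta$, this is precisely the edge $\{(p+1,\alpha x),(p,x\beta)\}$ of $\ngra{a}{N}$; so every left lift is adjacent to every right lift, and all lifts share one component, which I denote $C(p,x)$.

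The second key step is that $C$ is constant along edges of $\ngra{a}{N-1}$. Such an edge arises from a word $w\in\Nlan[N]{X}$ with $w_{i}=a$ and $0<i<N-1$, joining $(i,w_{[0,N-2]})$ and $(i-1,w_{[1,N-1]})$. But $w$ is itself a vertex $(i,w)$ of $\ngra{a}{N}$, and it is simultaneously a right lift of the first endpoint and a left lift of the second, whence $C(i,w_{[0,N-2]})=C(i-1,w_{[1,N-1]})$. As $\ngra{a}{N-1}$ is connected, $C$ is globally constant: all lifts of all its vertices fall into a single component $C^{\ast}$ of $\ngra{a}{N}$. Finally, every vertex $(i,u)$ of $\ngra{a}{N}$ is a lift of a vertex of $\ngra{a}{N-1}$ (its right projection $(i,u_{[0,N-2]})$ if $i\le N-2$, its left projection $(N-2,u_{[1,N-1]})$ if $i=N-1$), so it belongs to $C^{\ast}$; thus $\ngra{a}{N}$ is connected, the desired contradiction.

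I would flag the edge step as the part to get exactly right: the observation that the length-$N$ word generating a $\ngra{a}{N-1}$-edge is \emph{itself} a $\ngra{a}{N}$-vertex lifting both endpoints is what fuses the components, and it hinges on carefully tracking how the pointed position of $a$ shifts under left versus right extensions. The two other hypotheses enter cleanly and are each indispensable: $(N-1)$-prolongeability guarantees both lift types exist (used in the local step), while connectedness of $\ngra{a}{N-1}$ is what propagates the local and edge arguments to a global conclusion.
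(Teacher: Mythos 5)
Your proof is correct, but it is organized genuinely differently from the paper's. You argue by contradiction: assuming the gluing property (which is indeed the exact negation of the conclusion), you show connectivity passes from $\ngra{a}{N-1}$ up to $\ngra{a}{N}$. Each step checks out: $(N-1)$-prolongeability of $\lan{X}$ guarantees every vertex $(p,x)$ of $\ngra{a}{N-1}$ has both a left lift $(p+1,\alpha x)$ and a right lift $(p,x\beta)$; the gluing property makes $\alpha x\beta$ a word of $\Nlan[N+1]{X}$ whose associated edge joins them (and $0<p+1<N$, so this edge is legitimate); the $N$-word defining an edge of $\ngra{a}{N-1}$ is itself a vertex of $\ngra{a}{N}$ lifting both endpoints, so your component map is edge-constant; and every vertex of $\ngra{a}{N}$ is a lift of some vertex below, so global constancy of the component map yields connectivity of $\ngra{a}{N}$, the desired contradiction. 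The paper instead argues directly: it picks two non-connected vertices of $\ngra{a}{N}$, projects them into $\ngra{a}{N-1}$, lifts a connecting path back to a sequence of vertices of $\ngra{a}{N}$, locates a consecutive pair that fails to be connected, and extracts the missing $(N+1)$-word through a four-case analysis on the direction pattern of the path, with prolongeability invoked only in the two direction-reversal cases. Your invariant-based contrapositive avoids that case analysis entirely and is arguably cleaner; what the paper's direct version buys is an explicit construction of the word $w$ from any witness of disconnection, rather than a pure existence statement. Note also that the paper later applies the lemma (in Lemma \ref{l:sftGraph}) precisely in contrapositive form --- no missing word, hence the graphs stay connected --- so your formulation plugs into that application unchanged.
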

\begin{proof}
Let us assume that $\ngra{a}{N}$ is not connected. It contains two vertices $(i, u)\neq(j, v)$ which are not connected. 
Let us set 
\begin{dmath*}
p \hiderel{=} \left\{\begin{array}{ll} (i,u_{[0,N-2]})& \mbox{if }i<N-1\\ (i-1,u_{[1,N-1]}) & \mbox{otherwise,}\end{array}\right. \mbox{ and } q \hiderel{=} \left\{\begin{array}{ll} (j-1,v_{[1,N-1]})& \mbox{if }j>0\\ (j,v_{[0,N-2]}) & \mbox{otherwise.}\end{array}\right.\end{dmath*}
Vertices $p$ and $q$ are both in $\ngra{a}{N-1}$.
By assuming $\ngra{a}{N-1}$ is connected, there exists a sequence of vertices $(\z{0}, \w{0}), \ldots, (\z{n}, \w{n})$ of $\ngra{a}{N-1}$, with $(\z{0},\w{0}) = p$, $(\z{n},\w{n}) = q$ and such that, for all $0< m \leq n$,  $(\z{m-1},\w{m-1})$ and  $(\z{m},\w{m})$ are adjacent in $\ngra{a}{N-1}$, i.e. either $\z{m}=\z{m-1}-1$ and $\w{m-1}\w{m}_{N-2}=\w{m-1}_{0}\w{m}\in\Nlan[N]{X}$ or $\z{m}=\z{m-1}+1$ and $\w{m}\w{m-1}_{N-2}=\w{m}_{0}\w{m-1}\in\Nlan[N]{X}$. Let us set:
\begin{itemize}
\item for all $0<m\leq n$, 
\begin{dmath*}
(\s{m},\y{m})=\left\{\begin{array}{lrl} (\z{m-1},&\w{m-1}\w{m}_{N-2}) & \mbox{if } \z{m}=\z{m-1}-1,\\ (\z{m}, &\w{m}\w{m-1}_{N-2})& \mbox{if } \z{m}=\z{m-1}+1,\end{array}\right.
\end{dmath*}
\item $o=\left\{\begin{array}{ll}0 \mbox{ with } (\s{0}, \y{0})=(i,u) & \mbox{if }i=N-1, \\ 1 & \mbox{otherwise,}\end{array}\right.$
\item $r=\left\{\begin{array}{ll}n+1 \mbox{ with } (\s{n+1}, \y{n+1})=(j,v) & \mbox{if }j=0, \\ n & \mbox{otherwise.}\end{array}\right.$ 
\end{itemize}
 We have that $r-o\geq 1$ and that $(\s{m}, \y{m})$ is a vertex of $\ngra{a}{N}$, for all integers  $o\leq m \leq r$.
 
Since $(\s{o},\y{o}) = (i, u)$ and $(\s{r},\y{r}) = (j, v)$ are not connected in $\ngra{a}{N}$, there exists an integer $o\leq m < r$ such that $(\s{m}, \y{m})$ and $(\s{m+1}, \y{m+1})$ are not connected. Four different cases arise:
\begin{enumerate}
\item  if $m=0$ or $\left\{\begin{array}{ll}\z{m}=\z{m-1}-1,\\ \z{m+1}=\z{m}-1,\end{array}\right.$ then $\left\{\begin{array}{ll}\s{m+1} =\s{m}-1,\\ \y{m+1}_{[0,N-2]} = \y{m}_{[1,N-1]};\end{array}\right.$
\item  if $m=n$ or $\left\{\begin{array}{ll}\z{m}=\z{m-1}+1,\\ \z{m+1}=\z{m}+1,\end{array}\right.$ then $\left\{\begin{array}{ll}\s{m+1} =\s{m}+1,\\ \y{m+1}_{[1,N-1]} = \y{m}_{[0,N-2]};\end{array}\right.$
\item  if $\left\{\begin{array}{ll}\z{m}=\z{m-1}+1,\\ \z{m+1}=\z{m}-1,\end{array}\right.$ then $\left\{\begin{array}{ll}\s{m+1} =\s{m},\\ \y{m+1}_{[0,N-2]} = \y{m}_{[0,N-2]};\end{array}\right.$
\item  if $\left\{\begin{array}{ll}\z{m}=\z{m-1}-1,\\ \z{m+1}=\z{m}+1,\end{array}\right.$ then $\left\{\begin{array}{ll}\s{m+1} =\s{m},\\ \y{m+1}_{[1,N-1]} = \y{m}_{[1,N-1]}.\end{array}\right.$
\end{enumerate}
In Case 1 (resp. in Case 2), since the vertices $(\s{m},\y{m})$ and  $(\s{m+1},\y{m+1})$ are not connected, they are not adjacent, which arises only if $\y{m}\y{m+1}_{N-1}=\y{m}_{0}\y{m+1}\not\in\Nlan[N+1]{X}$ (resp. only if $\y{m+1}\y{m}_{N-1}=\y{m+1}_{0}\y{m}\not\in\Nlan[N+1]{X}$).

Let us remark that we have always $0<\s{m}=\s{m+1}<N-1$ in Cases 3 and 4.

In Case 3 and since $\lan{X}$ is $(N-1)$-prolongeable, there exists a letter $b\in\Nlan[1]{X}$ such that $b\y{m}_{[0,N-2]}=b\y{m+1}_{[0,N-2]}\in\Nlan[N]{X}$, which implies that $(\s{m}+1, b\y{m}_{[0,N-2]})$ is a vertex of $\ngra{a}{N}$. Since $(\s{m},\y{m})$ and  $(\s{m+1},\y{m+1})$ are not connected, the vertex $(\s{m}+1, b\y{m}_{[0,N-2]})$ cannot be adjacent to both $(\s{m},\y{m})$ and  $(\s{m+1},\y{m+1})$, which implies that $b\y{m}$ or  $b\y{m+1}$ does not belong to $\Nlan[N+1]{X}$.

Symmetrically in Case 4, since $(\s{m},\y{m})$ and  $(\s{m+1},\y{m+1})$ are not connected, there 
exists a letter $b\in\Nlan[1]{X}$ such that $\y{m}_{[1,N-1]}b=\y{m+1}_{[1,N-1]}b\in\Nlan[N]{X}$ and 
$\y{m}b$ or  $\y{m+1}b$ does not belong to $\Nlan[N+1]{X}$, which ends the proof.
\end{proof}

\begin{rem}\label{r:subgraph}
Let $X$ be a word set and $N$ be a positive integer. For all words $w\in\Nlan[N]{X}$, all positive integers $K$ and all integers $0\leq i<N$, the graph $\ngra[\NBP{X}{N}]{[w]}{K}$ is isomorphic to the subgraph of $\ngra[X]{w_{i}}{N+K-1}$ which contains all the vertices $(j,v)$ such that $v_{[j-i,j-i+N-1]} = w$ (Figure \ref{f:fig2}).
\end{rem}

\begin{figure}[!hb]
\centering{\includegraphics{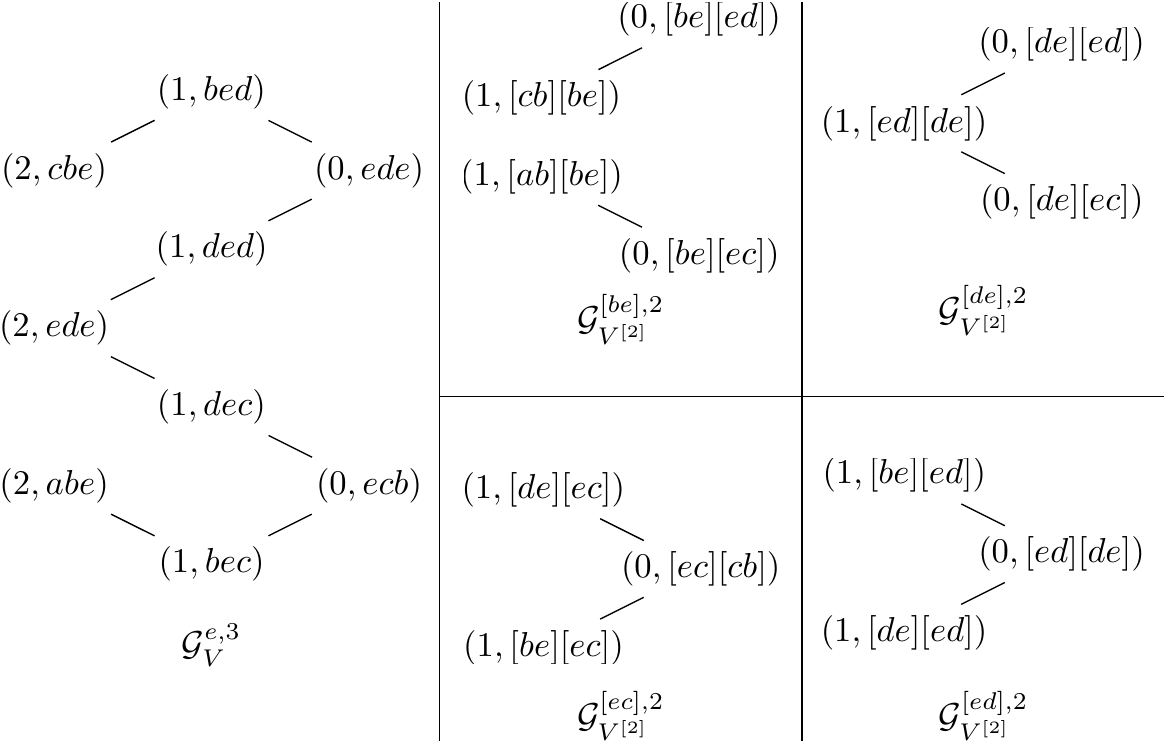}}
\caption{The graphs $\ngra[V]{e}{3}$, $\ngra[\NBP{V}{2}]{[be]}{2}$, $\ngra[\NBP{V}{2}]{[de]}{2}$, $\ngra[\NBP{V}{2}]{[ec]}{2}$ and  $\ngra[\NBP{V}{2}]{[ed]}{2}$ with $V = \{b a b e c b a b a b e c b e d e d e c b\}$. Note that $\ngra[\NBP{V}{2}]{[be]}{2}$ is not connected.}
\label{f:fig2}
\end{figure}

\begin{prop} \label{p:sommet_connecte} Let $X$ be a word set on an alphabet $\alphab{A}$, $N$ a positive integer, $u$ and $v$ be two words of $\Nlan{X}$ and $k$ be an integer with $0\leq k<N$. We have that $[u] \nrelaeq[\NBP{X}{N}]{k} [v]$ if and only if there exists a letter $a\in\alphab{A}$  such that $(k,u)$ and $(k,v)$ are two connected vertices of $\ngra{a}{N}$.
\end{prop}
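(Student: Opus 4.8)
The plan is to set up an explicit dictionary between the data witnessing $[u]\nrelaeq[\NBP{X}{N}]{k}[v]$ and walks in the graph $\ngra{a}{N}$, and then to read the same walk in two ways. The key preliminary observation concerns $\Nlan[2]{\NBP{X}{N}}$: a pair of blocks $[p][q]$ lies in $\Nlan[2]{\NBP{X}{N}}$ if and only if $p$ and $q$ overlap, i.e.\ there exists $w\in\Nlan[N+1]{X}$ with $p=w_{[0,N-1]}$ and $q=w_{[1,N]}$; for such a pair, a letter at position $j$ of $p$ sits at position $j-1$ of $q$. Comparing with the definition of $\ngra{a}{N}$, the edges are exactly the pairs $\{(j,p),(j-1,q)\}$ with $[p][q]\in\Nlan[2]{\NBP{X}{N}}$ and $p_{j}=q_{j-1}=a$; in particular, along an edge the index decreases precisely when one passes from a block to its successor. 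Throughout, for a witnessing sequence I track the running position $p_{i}=k+\sum_{j=1}^{i}\y{j}$, so that the constraint $-k\le\sum_{j\le i}\y{j}<N-k$ is exactly $0\le p_{i}<N$, and $\sum_{j\le n}\y{j}=0$ is exactly $p_{n}=k=p_{0}$.

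For the forward implication, suppose $[u]\nrelaeq[\NBP{X}{N}]{k}[v]$ with witness $\x{0}=[u^{(0)}],\dots,\x{n}=[u^{(n)}]$ (where $u^{(0)}=u$, $u^{(n)}=v$, each $u^{(i)}\in\Nlan[N]{X}$) and signs $\y{1},\dots,\y{n}$. Setting $a=u_{k}$, I prove by induction on $i$ that $(p_{i},u^{(i)})$ is a vertex of $\ngra{a}{N}$ and that $(p_{i-1},u^{(i-1)})$ and $(p_{i},u^{(i)})$ are adjacent. The step splits on the sign: if $\y{i}=-1$ then $p_{i}=p_{i-1}-1$ and $\x{i-1}\x{i}=[u^{(i-1)}][u^{(i)}]\in\Nlan[2]{\NBP{X}{N}}$, so the overlap carries the letter at position $p_{i-1}$ of $u^{(i-1)}$ to position $p_{i}$ of $u^{(i)}$, keeping it equal to $a$ and realizing the required edge; the case $\y{i}=1$ is symmetric, with $p_{i}=p_{i-1}+1$ and $\x{i}\x{i-1}=[u^{(i)}][u^{(i-1)}]\in\Nlan[2]{\NBP{X}{N}}$. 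Since $p_{0}=p_{n}=k$, the resulting walk joins $(k,u)$ to $(k,v)$ in $\ngra{a}{N}$, so these vertices are connected.

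For the converse, suppose $(k,u)$ and $(k,v)$ are connected in $\ngra{a}{N}$, and fix a walk $(i_{0},u^{(0)}),\dots,(i_{m},u^{(m)})$ from $(k,u)$ to $(k,v)$; necessarily $u_{k}=v_{k}=a$. I read off $\x{i}=[u^{(i)}]$ and set $\y{i}=i_{i}-i_{i-1}\in\{-1,1\}$, the index change along the $i$-th edge. Because every edge of $\ngra{a}{N}$ joins a block to its successor while decreasing the index, a step with $\y{i}=-1$ gives $\x{i-1}\x{i}\in\Nlan[2]{\NBP{X}{N}}$ and a step with $\y{i}=1$ gives $\x{i}\x{i-1}\in\Nlan[2]{\NBP{X}{N}}$, matching the definition of the relation. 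The running sum equals $\sum_{j\le i}\y{j}=i_{i}-k$, so the bound $0\le i_{i}<N$ (automatic for a vertex index) is exactly the position constraint, and $i_{m}=k$ gives $\sum_{j\le m}\y{j}=0$. Hence $[u]\nrelaeq[\NBP{X}{N}]{k}[v]$.

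I expect the only delicate point to be the bookkeeping of sign conventions: one must verify that $\y{i}=-1$ (which the definition pairs with $\x{i-1}$ preceding $\x{i}$) corresponds to the tracked position of $a$ decreasing by one as one moves to the next overlapping block, and that this is exactly the orientation of the edges of $\ngra{a}{N}$. Once this correspondence is fixed, both implications become the same walk read in two directions, and the position bound together with the zero-sum condition translate transparently into the requirements that the vertex indices stay in $\{0,\dots,N-1\}$ and that the walk begins and ends at index $k$.
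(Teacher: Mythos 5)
Your proof is correct and takes essentially the same route as the paper's: both directions are obtained by the same dictionary between witnessing sequences for $\nrelaeq[\NBP{X}{N}]{k}$ and walks in $\ngra{a}{N}$, tracking the vertex index as $k+\sum_{j\leq i}\y{j}$ so that the position bound and the zero-sum condition become exactly the requirements that indices stay in $\{0,\dots,N-1\}$ and the walk starts and ends at index $k$. The only differences are cosmetic (you present the implications in the opposite order and state the overlap characterization of $\Nlan[2]{\NBP{X}{N}}$ as an explicit preliminary, which the paper leaves implicit).
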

\begin{proof}
 Let us first assume that  $(k,u)$ and $(k,v)$ are two connected vertices of $\ngra{a}{N}$. There exists a path $(\z{0}, \w{0}), \ldots, (\z{n}, \w{n})$ of $\ngra{a}{N}$ where 
\begin{itemize}
\item $(\z{0}, \w{0}) = (k,u)$ and $(\z{n}, \w{n}) = (k,v)$,
\item for all $0< i\leq n$, $\{(\z{i-1}, \w{i-1}), (\z{i}, \w{i})\}$ is an edge of $\ngra{a}{N}$.
\end{itemize}
By construction, for all $0< i\leq n$, the edge $\{(\z{i-1}, \w{i-1}), (\z{i}, \w{i})\}$ is such that either $[\w{i-1}][\w{i}]\in\Nlan[2]{\NBP{X}{N}}$ and $\z{i}=\z{i-1}-1$, or $[\w{i}][\w{i-1}]\in\Nlan[2]{\NBP{X}{N}}$ and $\z{i}=\z{i-1}+1$. By setting $\y{i}=\z{i}-\z{i-1}$ for all $0<i\leq n$, we get that $\sum_{j=1}^{n}\y{j}=\z{n}-\z{0}=0$. Moreover, since $\z{i} =\z{0}+\sum_{j=1}^{i}\y{j}$ and $0\leq \z{i}<N$, we have that $k\leq\sum_{j=1}^{i}\y{j}<N-k$.  Considering $\y{1}, \ldots \y{n}$ and $\x{i} = [\w{i}]$ for all $0\leq i\leq n$ leads to $[u]\nrelaeq{k}[v]$.

Reciprocally, if $[u]\nrelaeq{k}[v]$ then there exist a non-negative integer $n$, a sequence $[\x{0}], [\x{2}], \ldots, [\x{n}]$ of blocks of $\Nlan{X}$ and a sequence $\y{1}, \y{2}, \ldots, \y{n}$ of elements in $\{-1, 1\}$, verifying
\begin{itemize}
\item $\x{0} = u$, $\x{n} = v$ and $\displaystyle\sum_{j=1}^{n}\y{j} = 0$,
\item for all integers $0< i\leq n$, we have
\begin{dmath*}
-k\hiderel{\leq}\sum_{j=1}^{i}\y{j}\hiderel{<}N-k  \mbox{ and }\left\{\begin{array}{llr}
\x{i-1}\x{i}_{N-1}\in\Nlan[N+1]{X} & \mbox{ if } \y{i} = & -1,\\
\x{i}\x{i-1}_{N-1}\in\Nlan[N+1]{X} & \mbox{ if } \y{i} = & 1.
\end{array}\right. 
\end{dmath*}
\end{itemize}
For all integers $0\leq i\leq n$, we have $\x{i}_{k+\sum_{j=1}^{i}\y{j}}=\x{0}_{k}$. By setting $a=\x{0}_{k}$, all the pairs $(k+\sum_{j=1}^{i}\y{j}, \x{i})$ are vertices of $\ngra{a}{N}$ which are such that $\{(k+\sum_{j=1}^{i}\y{j}, \x{i}), (k+\sum_{j=1}^{i-1}\y{j}, \x{i-1})\}$ are edges of  $\ngra{a}{N}$. The vertices $(k,u)$ and $(k,v)$ are thus connected.
\end{proof}

\begin{cor}\label{c:numpre}Let $N>1$ be an integer, $X$ be a word set, $Y$ be the maximal $N$-preimage of $\NBP{X}{N}$ and $\psi$ be the projection  from $\Nlan[1]{Y}$ to $\Nlan[1]{X}$ such that $\psi(Y) = X$, whose existence is ensured by Corollary \ref{c:maxpre}.

For all $a\in\Nlan[1]{X}$, the number of letters in $\psi^{-1}(a)$ is equal to the number of  connected components of $\ngra{a}{N}$.\end{cor}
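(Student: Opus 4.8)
The plan is to exhibit, for each fixed letter $a\in\Nlan[1]{X}$, an explicit surjection $\Theta$ from the vertex set of $\ngra{a}{N}$ onto the fiber $\psi^{-1}(a)$ whose fibers are precisely the connected components of $\ngra{a}{N}$; the asserted equality of cardinalities is then immediate.

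First I would pin down the projection $\psi$ at the letter level. Write $W=\NBP{X}{N}$, so that $Y=\projI(W)$. By the construction carried out in the proof of Theorem \ref{t:carac}, every letter of $Y$ has the form $\proj{k}([w])$ for some block $w\in\Nlan[N]{X}$ and some $0\le k<N$, namely the unique letter of $\alphab{B}$ carrying the class of $[w]$ in its $k$-th coordinate. Composing $\projI\circ\Phi_N$ and comparing positions with the relation $\psi(Y)=X$ yields the key identity $\psi(\proj{k}([w]))=w_k$: the letter $\proj{k}([w])$ of $Y$ is sent by $\psi$ to the $k$-th letter of the block $w$. Consequently the map $\Theta$ sending a vertex $(i,u)$ of $\ngra{a}{N}$ to $\proj{i}([u])$ takes its values in $\psi^{-1}(a)$, since $u_i=a$ at every vertex. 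It is onto $\psi^{-1}(a)$: any $c\in\psi^{-1}(a)$, being a letter of $Y$, equals $\proj{i}([u])$ for some $i$ and some block $u$ occurring in $X$, and then $u_i=\psi(c)=a$, so $(i,u)$ is a vertex of $\ngra{a}{N}$ with $\Theta(i,u)=c$.

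It remains to show that the fibers of $\Theta$ coincide with the connected components of $\ngra{a}{N}$. For the easy inclusion I would check that $\Theta$ is constant along every edge: an edge $\{(i,u),(i-1,v)\}$ arises from a word of $\Nlan[N+1]{X}$ whose two successive length-$N$ windows are $u$ and $v$, hence $[u][v]\in\Nlan[2]{W}$, and the construction in the proof of Theorem \ref{t:carac} gives $\proj{i}([u])=\proj{i-1}([v])$, i.e. $\Theta(i,u)=\Theta(i-1,v)$. Thus $\Theta$ is constant on each connected component, so every component is contained in a single fiber.

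The converse inclusion, that each fiber $\Theta^{-1}(c)$ is connected, is where I expect the real work. I would stratify the fiber by position, setting $F_m=\{(m,u)\in\Theta^{-1}(c)\}$, which is nonempty exactly for $m$ in the contiguous range of nonempty coordinates of $c$. Two vertices of the same $F_m$ carry blocks lying in the $m$-th coordinate of $c$, hence $\nrelaeq[W]{m}$-related blocks, so Proposition \ref{p:sommet_connecte} connects them inside $\ngra{a}{N}$, handling the within-position part. To pass between consecutive strata $F_m$ and $F_{m-1}$, I would use that, since both coordinates of $c$ are nonempty, the construction of the preimage alphabet $\alphab{B}$ supplies a witnessing pair $[u][v]\in\Nlan[2]{W}$ with $[u]$ in coordinate $m$ and $[v]$ in coordinate $m-1$; by the identity above this pair is realised as an edge $\{(m,u),(m-1,v)\}$ of $\ngra{a}{N}$ linking $F_m$ to $F_{m-1}$. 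Chaining the within-position connections with these inter-stratum edges shows $\Theta^{-1}(c)$ is connected, hence contained in one component. The delicate points, which rely on the standing assumption that all words of $X$ are longer than $N$ together with careful bookkeeping of the coordinate indices of $\alphab{B}$, are exactly the direction of the index shift along edges and the guarantee that consecutive nonempty coordinates are witnessed by a genuine length-two block of $W$. Once fibers and components are identified, $\Theta$ descends to a bijection between the connected components of $\ngra{a}{N}$ and $\psi^{-1}(a)$, which is the claimed equality.
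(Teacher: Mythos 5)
Your proposal is correct and takes essentially the same route as the paper: both rest on the map $(k,w)\mapsto\proj{k}([w])$ from vertices of $\ngra{a}{N}$ to the fiber $\psi^{-1}(a)$, pinned down by the identity $\psi(\proj{k}([w]))=w_{k}$, together with Proposition \ref{p:sommet_connecte} to translate graph connectivity into the relations $\nrelaeq{k}$. The only difference is level of detail: the surjectivity, edge-constancy and cross-level fiber-connectivity arguments you spell out are exactly what the paper compresses into its single closing appeal to Proposition \ref{p:sommet_connecte}.
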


\begin{proof}
For all $0\leq k<N$, we define the projection $\projX{k}$ from $\Nlan{\NBP{X}{N}}$ to $\Nlan{Y}$, which associates to all letters $[w]=[w_{0}\ldots w_{N-1}]$, the letter $\projX{k}([w]) = w_{k}$.
By defining $\projXI$ on $\NBP{X}{N}$ as  $\projXI(\NBP{X}{N}) = \bigcup_{u\in \NBP{X}{N}} \projXI(u)$ where 
\begin{dmath*}
\projXI(u) = \left\{\begin{array}{ll}
\projX{0}(u)\projX{1}(u_{|u|-1})\ldots\projX{N-1}(u_{|u|-1}) & \mbox{if $u\in\alphab{A}^{\star}$,}\\
\projX{0}(u) & \mbox{otherwise,}
\end{array}\right.
\end{dmath*}
we have that $X = \projXI(\NBP{X}{N})$.

For all $a\in\Nlan[1]{X}$, a letter $[w] = [w_{0}\ldots w_{N-1}]$ of  $\NBP{X}{N}$ is such that $\projX{k}([w]) = a$ if and only if $(k, w)$ is a vertice of  $\ngra{a}{N}$.
Under the notation of the proof of Theorem \ref{t:carac}, for all $[w]\in\Nlan[1]{\NBP{X}{N}}$ and all $0\leq k<N$, we have that $\projX{k}([w]) = \psi(\proj{k}([w]))$.

Let $c$ be a letter in $\Nlan[1]{Y}$ such that $\psi(c) = a$. For all $[w]\in\Nlan[1]{\NBP{X}{N}}$ and for all $0\leq k<N$, if $\proj{k}([w]) = c$ then $(k, w)$ is a vertice of  $\ngra{a}{N}$.
From Proposition \ref{p:sommet_connecte}, it follows that there are exactly as much letters $c\in \Nlan[1]{Y}$ such that $\psi(c) = a$ as there are connected components in $\ngra{a}{N}$.
\end{proof}

\begin{cor}
Let $N$ be a positive integer. If a word set $X$ is a maximal $N$-preimage then it is a maximal $M$-preimage for all integers $1< M \leq N$.
\end{cor}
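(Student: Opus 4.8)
The plan is to read Corollary \ref{c:numpre} as a connectedness criterion for being a maximal preimage, and then to transfer this criterion from order $N$ down to order $M$ by invoking the monotonicity contained in the second assertion of Lemma \ref{l:graph}. Since the conclusion only concerns integers $1 < M \le N$, I may assume $N \ge 2$ throughout (for $N = 1$ the statement is vacuous), so that Corollary \ref{c:numpre} is applicable at every order that occurs.

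First I would record the characterization. Let $Y$ be the maximal $N$-preimage of $\NBP{X}{N}$ and let $\psi \colon \Nlan[1]{Y} \to \Nlan[1]{X}$ be the projection with $\psi(Y) = X$ provided by Corollary \ref{c:maxpre}. As $X$ is itself an $N$-preimage of $\NBP{X}{N}$, it satisfies $Y \succcurlyeq X$, and $X$ is a maximal $N$-preimage precisely when $X \similar Y$, that is, precisely when the surjective projection $\psi$ is also injective, i.e. when every fibre $\psi^{-1}(a)$ is a singleton. Corollary \ref{c:numpre} identifies $\card{\psi^{-1}(a)}$ with the number of connected components of $\ngra{a}{N}$. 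Hence $X$ is a maximal $N$-preimage if and only if $\ngra{a}{N}$ is connected for every $a \in \Nlan[1]{X}$. The very same argument, applied verbatim with $M$ in place of $N$ (legitimate because $M > 1$), gives the analogous equivalence characterising maximal $M$-preimages.

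With the criterion established, the descent becomes immediate. Assuming $X$ is a maximal $N$-preimage, the characterization tells us that $\ngra{a}{N}$ is connected for every letter $a$. Fixing any integer $M$ with $1 < M \le N$, the second assertion of Lemma \ref{l:graph} forces $\ngra{a}{M}$ to be connected for every such $a$. The $M$-version of the characterization then yields that $X$ is a maximal $M$-preimage, which is what we want.

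The hard part is entirely the first step: recognizing that Corollary \ref{c:numpre} expresses maximality as connectedness of all the graphs $\ngra{a}{N}$, and in particular verifying that $\psi$ having only singleton fibres is the same as the similarity $X \similar Y$. Once that translation is in place, the monotonicity of connectedness in Lemma \ref{l:graph} carries out the rest with no computation whatsoever.
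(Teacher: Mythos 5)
Your proof is correct and takes essentially the same approach as the paper, whose own proof is precisely "a direct consequence of Lemma \ref{l:graph}, Item 2, and Corollary \ref{c:numpre}": use Corollary \ref{c:numpre} to identify maximality with connectedness of every graph $\ngra{a}{N}$, then descend to order $M$ by the monotonicity of connectedness. The only difference is one of detail: you make explicit the translation between singleton fibres of $\psi$ and the similarity $X \similar Y$ (and the restriction $M>1$ needed to invoke Corollary \ref{c:numpre} at order $M$), steps the paper leaves implicit; this equivalence is in fact exactly the content of the theorem that immediately follows the corollary in the paper.
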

\begin{proof}
The result is a direct consequence of Lemma \ref{l:graph}-Item 2 and Corollary~\ref{c:numpre}.
\end{proof}

\begin{theo} 
A word set $X$ is the maximal $N$-preimage of its $N$-block presentation if and only if all letters in $\Nlan[1]X$ are $N$-connected.
\end{theo}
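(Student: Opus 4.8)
The plan is to read this statement as an almost immediate consequence of Corollary~\ref{c:numpre}, the whole point being to translate the relation ``$X$ is the maximal $N$-preimage of $\NBP{X}{N}$'' into a counting statement about connected components. First I would fix $N>1$ (the case $N=1$ is trivial, since $\Phi_{1}$ is a mere renaming, so that $X$ is always its own maximal $1$-preimage, while every $\ngra{a}{1}$ is connected by Lemma~\ref{l:graph}) and set $Y=\NBP{(\NBP{X}{N})}{-N}$, the maximal $N$-preimage of $\NBP{X}{N}$. Since $\NBP{X}{N}\similar\NBP{X}{N}$, the word set $X$ is itself an $N$-preimage of $\NBP{X}{N}$, so Corollary~\ref{c:maxpre} provides a projection $\psi$ from $\Nlan[1]{Y}$ to $\Nlan[1]{X}$ with $\psi(Y)=X$.

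The key reduction is that \emph{$X$ is the maximal $N$-preimage of $\NBP{X}{N}$ if and only if $\psi$ is a bijection between the two alphabets}. On the one hand, $\psi(Y)=X$ already gives $Y\succcurlyeq X$ and shows that $\psi$ is surjective onto $\Nlan[1]{X}$; if $\psi$ is moreover injective, then it is a renaming, so $\psi^{-1}$ witnesses $X\succcurlyeq Y$ and hence $X\similar Y$, i.e. $X$ is the maximal $N$-preimage. Conversely, if $X\similar Y$ then $\card{\Nlan[1]{X}}=\card{\Nlan[1]{Y}}$, and a surjection between finite sets of equal cardinality is a bijection, so $\psi$ is injective. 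Thus the property we want is exactly the condition $\card{\psi^{-1}(a)}=1$ for every $a\in\Nlan[1]{X}$.

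At this point I would simply invoke Corollary~\ref{c:numpre}, which identifies $\card{\psi^{-1}(a)}$ with the number of connected components of $\ngra{a}{N}$. Hence $\psi$ is injective exactly when every $\ngra{a}{N}$ has a single connected component, that is, when every letter $a\in\Nlan[1]{X}$ is $N$-connected, which closes the equivalence. I do not expect a serious obstacle, as all the combinatorial work has already been carried out in Proposition~\ref{p:sommet_connecte} and Corollary~\ref{c:numpre}. The only point demanding care is the finiteness/cardinality argument of the middle paragraph: one must make sure that ``maximal $N$-preimage'' is genuinely \emph{equivalent} to ``$\psi$ injective'', and not merely implied by it, which is why both inclusions $X\succcurlyeq Y$ and $Y\succcurlyeq X$ have to be tracked explicitly.
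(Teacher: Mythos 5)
Your proposal is correct and takes essentially the same route as the paper, whose entire proof is the one-line remark that the theorem follows from Corollary~\ref{c:numpre}; your contribution is simply to make explicit the counting step the paper leaves implicit, namely that ``$X$ is the maximal $N$-preimage'' is equivalent to the projection $\psi$ of Corollary~\ref{c:maxpre} being a bijection, which by Corollary~\ref{c:numpre} amounts to every graph $\ngra{a}{N}$ having exactly one connected component. The separate treatment of $N=1$ and the care about tracking both inclusions $X\succcurlyeq Y$ and $Y\succcurlyeq X$ are sound and consistent with the paper's definitions.
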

\begin{proof}
The theorem follows from  Corollary \ref{c:numpre}.
\end{proof}

We will say that a word set $X$ is ``a maximal $N$-preimage'' if it is the maximal $N$-preimage of its $N$-block presentation $\NBP{X}{N}$.

\begin{prop}\label{p:maxpre}
Let $N>K$ be two positive integers and $X$ be a word set. If for all subwords $w\in\Nlan[K]{X}$ the graphs $\ngra[\NBP{X}{K}]{[w]}{N-K+1}$ are connected then the maximal $(N+K-1)$-preimage of $X$ is a $K$-block presentation.
\end{prop}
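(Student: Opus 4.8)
The plan is to verify the block-presentation criterion of Theorem~\ref{t:carac} for $Y:=\NBP{X}{-(N+K-1)}$, after reformulating it as a connectivity statement. For the maximal $(N+K-1)$-preimage even to exist, $X$ must be an $(N+K-1)$-block presentation, which I take as the standing assumption (otherwise the conclusion is empty); by Remark~\ref{r:compo} $X$ is then also an $N$-, a $K$- and an $(N+2K-2)$-block presentation. Write $W:=\NBP{X}{-(N+2K-2)}$ for its maximal $(N+2K-2)$-preimage, so that $\NBP{W}{N+2K-2}\similar X$ and $\NBP{W}{K}$ is defined.

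First I would reduce the statement to connectivity. Since $\NBP{(\NBP{W}{K})}{N+K-1}\similar\NBP{W}{N+2K-2}\similar X$ (Remark~\ref{r:compo}), the word set $\NBP{W}{K}$ is an $(N+K-1)$-preimage of $X$, hence $\NBP{W}{K}\preccurlyeq Y$ by Corollary~\ref{c:maxpre}. Conversely, if $Y\similar\NBP{V}{K}$ for some $V$, then $\NBP{V}{N+2K-2}\similar\NBP{Y}{N+K-1}\similar X$ forces $V\preccurlyeq W$ (Corollary~\ref{c:maxpre}), and a letter projection $W\to V$ induces one $\NBP{W}{K}\to\NBP{V}{K}$, so $Y\similar\NBP{V}{K}\preccurlyeq\NBP{W}{K}$. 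Thus $Y$ is a $K$-block presentation if and only if $Y\similar\NBP{W}{K}$, i.e. if and only if $\NBP{W}{K}$ is itself the maximal $(N+K-1)$-preimage of $X$. By the characterization theorem just above, this holds exactly when every letter $[w]$ of $\NBP{W}{K}$ is $(N+K-1)$-connected, that is, when all the graphs $\ngra[\NBP{W}{K}]{[w]}{N+K-1}$ are connected.

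It remains to deduce this from the hypothesis, and here I would line up the tools on both sides. The graphs $\ngra[\NBP{X}{K}]{[w]}{N-K+1}$ are the order-$(N-K+1)$ graphs of the letters of $\NBP{X}{K}$, so the hypothesis says every letter of $\NBP{X}{K}$ is $(N-K+1)$-connected; by the same characterization theorem, together with $\NBP{(\NBP{X}{K})}{N-K+1}\similar\NBP{X}{N}$, this means $\NBP{X}{K}$ is the maximal $(N-K+1)$-preimage of $\NBP{X}{N}$. Remark~\ref{r:subgraph} then realizes each $\ngra[\NBP{W}{K}]{[w]}{N+K-1}$ as a subgraph of the order-$(N+2K-2)$ graph $\ngra[W]{a}{N+2K-2}$ — which is already connected because $W$ is a maximal preimage — and each $\ngra[\NBP{X}{K}]{[w]}{N-K+1}$ as a subgraph of $\ngra[X]{a}{N}$. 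Proposition~\ref{p:sommet_connecte} converts both kinds of connectivity into relations, the target into relations of type $r^{\cdot,N+2K-2}$ on $X\similar\NBP{W}{N+2K-2}$ and the hypothesis into relations of type $r^{\cdot,N}$ on $\NBP{X}{N}$, so the whole problem becomes one of comparing the relations carried by $X$ with those carried by its own $N$-block presentation $\NBP{X}{N}$.

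The hard part will be precisely this comparison across levels: the connectivity I am handed sits at order $N-K+1$ over $\NBP{X}{K}$, while the connectivity I need sits at the higher order $N+K-1$ over the much deeper preimage $\NBP{W}{K}$. Passing from a lower to a higher order is the direction \emph{not} covered by Lemma~\ref{l:graph}, and this is where I expect Lemma~\ref{l:graphbis} to intervene: were some $\ngra[\NBP{W}{K}]{[w]}{N+K-1}$ disconnected while the lower-order graphs remain connected, Lemma~\ref{l:graphbis} would produce a word of length $N+K$ whose two length-$(N+K-1)$ factors are admissible but whose full extension is forbidden, and I would argue that such a forbidden extension is incompatible with the uniform connectivity supplied by the hypothesis (using, as in Lemma~\ref{l:graphbis}, the prolongeability of $\lan{X}$, which is automatic for subshifts). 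Making this propagation rigorous is delicate exactly because preimages do not compose freely (Remark~\ref{p:remonte}), so the uniform strength of the hypothesis — connectivity for \emph{every} letter of $\NBP{X}{K}$ — is what must be exploited to climb all the way up without ever needing a coordinate beyond the first $K$ to separate two letters of $Y$.
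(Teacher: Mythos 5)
There is a genuine gap here, and it is unfixable, because the statement you set out to prove under your literal reading (with $Y:=\NBP{X}{-(N+K-1)}$) is actually false. Two concrete failures. First, your word set $W=\NBP{X}{-(N+2K-2)}$ need not exist: Remark \ref{r:compo} gives that an $(N+K-1)$-block presentation is an $M$-block presentation for all $M\leq N+K-1$, but says nothing for $M=N+2K-2>N+K-1$; in fact, $X$ being an $(N+2K-2)$-block presentation is \emph{equivalent} to some $(N+K-1)$-preimage of $X$ being a $K$-block presentation, which is precisely the kind of fact under proof (and, by the example of Section \ref{s:composing}, a non-maximal preimage can be a block presentation while the maximal one is not, so this is neither free nor the same as the conclusion). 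Second, the ``delicate propagation'' you defer at the end cannot be carried out: take $Y$ to be the full shift on $\{a,b\}$, $X=\NBP{Y}{4}$, $N=3$, $K=2$. Since every word over $\{a,b\}$ occurs in $Y$, each graph $\ngra[Y]{a}{4}$, $\ngra[Y]{b}{4}$ is connected, so $Y$ \emph{is} the maximal $4$-preimage of $X$; moreover every hypothesis graph $\ngra[\NBP{X}{2}]{[w]}{2}$ has two vertices on each of its two levels and all four possible edges between them, hence is connected; yet $Y$ is not a $2$-block presentation, because both relations $r^{0,2}_{Y}$ and $r^{1,2}_{Y}$ relate $a$ to $b$, contradicting the criterion of Theorem \ref{t:carac}. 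In this example $W$ indeed fails to exist, which is exactly where your argument collapses. (A further local flaw: the prolongeability needed for Lemma \ref{l:graphbis} is not ``automatic'' here, since the proposition concerns arbitrary word sets --- e.g. the single finite word $V$ of the paper --- and not subshifts.)

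The resolution is that the conclusion, as the paper proves it and uses it (proof of Lemma \ref{l:sftPreim}, and the discussion of the counter-example right after the proposition), must be read with the indices transposed: \emph{the maximal $(N-K+1)$-preimage of $\NBP{X}{N}$ is a $K$-block presentation}, namely it is $\NBP{X}{K}$. Under that reading, the fact you correctly establish in the middle of your attempt is already the entire proof, and it is exactly the paper's: the hypothesis says every letter $[w]$ of $\NBP{X}{K}$ is $(N-K+1)$-connected, so by the connectedness characterization of maximal preimages (Corollary \ref{c:numpre} and the theorem that follows it) together with $\NBP{(\NBP{X}{K})}{N-K+1}\similar\NBP{X}{N}$ (Remark \ref{r:compo}), the word set $\NBP{X}{K}$ is the maximal $(N-K+1)$-preimage of $\NBP{X}{N}$, and it is trivially a $K$-block presentation. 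Everything in your proposal beyond that observation --- the scaffolding around $W$ and the attempted climb from connectivity at order $N-K+1$ to order $N+K-1$ --- is an attempt to bridge a gap that cannot be bridged; the fix is not more work but recognizing that your intermediate fact is the intended conclusion.
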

\begin{proof}
Let us assume that, for all subwords $w\in\Nlan[K]{X}$, the graphs $\ngra[\NBP{X}{K}]{[w]}{N-K+1}$ are connected. From Corollary \ref{c:maxpre}, it follows that $\NBP{X}{K}$ is the maximal $(N-K+1)$-preimage of $\NBP{X}{N}$, which is thus itself a $K$-block presentation.
\end{proof}

Let's go back to the counter-example of Section \ref{s:composing} for which we observed that the maximal $2$-preimage of $\NBP{V}{3}$ is not a $2$-block presentation. Proposition \ref{p:maxpre} ensures that it can occur only if there exists a subword $w\in\Nlan[2]{V}$ such that the graph $\ngra[\NBP{V}{2}]{[w]}{2}$ is not connected. We do observe in Figure \ref{f:fig2} that the graph $\ngra[\NBP{V}{2}]{[be]}{2}$ is not connected.

\section{Direct conjugacy between SFTs}\label{s:decidability}

In this section, we focus on word sets which are SFTs, and we prove the decidability of a strong form of conjugacy.
\begin{defn}
Let $X$ and $Y$ be two SFTs. We say that $X$ and $Y$ are directly conjugate if there exists two positive integers $M$ and $N$ such that $\NBP{X}{M}$ and $\NBP{Y}{N}$ are similar.
\end{defn}
Direct conjugacy basically implies (topological) conjugacy.

\begin{rem}[\cite{Lind}]\label{r:Nblock}
Let $X$ be an SFT and $N$ a positive integer. A finite forbidden language for $\NBP{X}{N}$ can be computed in finite time from that of $X$.
\end{rem}

\begin{rem}
Being given an integer $N\ge0$ and an SFT $X$ represented by a forbidden language, the language $\Nlan[N]X$ can be computed in finite time.
\end{rem}

\begin{prop}\label{c:dec}
Checking if two SFTs are similar is decidable.
\end{prop}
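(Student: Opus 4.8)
The plan is to reduce the similarity of two SFTs to a finite comparison between finite sets of blocks, which can then be searched exhaustively. First I would fix a common step: if $X$ is an $\Stp_X$-step SFT and $Y$ an $\Stp_Y$-step SFT, then both are $L$-step for $L = \max(\Stp_X,\Stp_Y)$ (an $\Stp$-step SFT is $N$-step for all $N\ge\Stp$), and a finite forbidden language for each, all of whose words have length $L+1$, is computable. The key observation is that an $L$-step SFT is entirely determined by its set of allowed blocks of length $L+1$: a bi-infinite word $u$ belongs to such an SFT $X$ if and only if every length-$(L+1)$ subword of $u$ lies in $\Nlan[L+1]{X}$. Indeed such a subword is then not forbidden, and since every forbidden word has length $L+1$, avoiding all of them is equivalent to having all length-$(L+1)$ subwords allowed. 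Consequently two $L$-step SFTs are equal precisely when they share the same set of allowed $(L+1)$-blocks.

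Second, I would translate similarity into a renaming of the one-letter alphabets. By definition $X\similar Y$ means $X\succcurlyeq Y$ and $Y\succcurlyeq X$, i.e. there are projections $\varphi$ and $\psi$ with $\varphi(X)=Y$ and $\psi(Y)=X$. Restricted to the finite alphabets $\Nlan[1]{X}$ and $\Nlan[1]{Y}$ these maps are onto, so $\psi\circ\varphi$ is a surjection of the finite set $\Nlan[1]{X}$ onto itself, hence a bijection; thus $\varphi$ is a bijection from $\Nlan[1]{X}$ to $\Nlan[1]{Y}$. Conversely any bijection $\varphi$ with $\varphi(X)=Y$ gives $\varphi^{-1}(Y)=X$, so $X\similar Y$. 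Hence $X\similar Y$ if and only if there exists a bijective renaming $\varphi:\Nlan[1]{X}\to\Nlan[1]{Y}$ with $\varphi(X)=Y$. Since renaming preserves the step and sends allowed blocks to allowed blocks ($\Nlan[L+1]{\varphi(X)}=\varphi(\Nlan[L+1]{X})$), the block-determinacy above shows that $\varphi(X)=Y$ if and only if $\varphi(\Nlan[L+1]{X})=\Nlan[L+1]{Y}$.

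Finally I would assemble the decision procedure. Compute $\Nlan[1]{X}$, $\Nlan[1]{Y}$, $\Nlan[L+1]{X}$ and $\Nlan[L+1]{Y}$, which is possible in finite time. If $\card{\Nlan[1]{X}}\neq\card{\Nlan[1]{Y}}$, answer that the SFTs are not similar. Otherwise enumerate the finitely many bijections $\varphi:\Nlan[1]{X}\to\Nlan[1]{Y}$, and for each test the equality of the two finite block sets $\varphi(\Nlan[L+1]{X})$ and $\Nlan[L+1]{Y}$; answer ``similar'' if and only if some $\varphi$ passes the test. All steps terminate, so the procedure decides similarity.

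The main obstacle is the reduction step, not the search: one must justify that comparing only the finite sets of $(L+1)$-blocks, rather than the infinite SFTs themselves, is sound and complete. This rests on two points that require care: that an $L$-step SFT is recovered exactly from $\Nlan[L+1]{X}$, so that no spurious, non-extendable block can slip in, which is why one works with the actual block set $\Nlan[L+1]{X}$ rather than merely the complement of the forbidden language; and that similarity is genuinely realized by a bijection of the one-letter alphabets rather than by an arbitrary pair of non-injective projections. Once these are established, finiteness of the alphabets makes the search for a renaming effective.
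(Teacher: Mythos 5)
Your proof is correct and follows the same overall strategy as the paper's (which is only a two-line sketch): reduce similarity to an exhaustive search over the finitely many bijections between the one-letter alphabets, each candidate tested against a computable finite invariant. The invariants differ, though. The paper computes a set of \emph{minimal} forbidden patterns for each SFT and seeks a bijection carrying the minimal forbidden patterns of one onto those of the other; you instead normalize both SFTs to a common step $L=\max(\Stp_{X},\Stp_{Y})$ and seek a bijection $\varphi:\Nlan[1]{X}\to\Nlan[1]{Y}$ with $\varphi(\Nlan[L+1]{X})=\Nlan[L+1]{Y}$, which is sound and complete because an $L$-step SFT consists exactly of the bi-infinite words all of whose $(L+1)$-subwords are allowed blocks. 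Your route is more elementary and self-contained: it avoids defining and computing minimal forbidden words, relying only on the computability of $\Nlan[L+1]{X}$ (already recorded as a remark in the paper), and it spells out a point the sketch leaves implicit --- that mutual projections between word sets over finite alphabets are forced to be bijective renamings, so restricting the search to bijections loses nothing. The paper's route buys an invariant that is independent of the chosen forbidden-language presentations and requires no prior equalization of the steps. Both procedures terminate and decide the same property, so either argument establishes the proposition; yours is the more complete write-up.
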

\begin{proof}
We simply sketch the proof.
From any (one-dimensional) SFT, we can easily compute a set of minimal forbidden patterns.
Then it is not difficult to check all the alphabet bijections that send, when extended to words, each minimal forbidden pattern of the first one to one minimal forbidden pattern of the second one.
\end{proof}

\begin{lemma}\label{l:sftGraph}
Let $X$ be a SFT. If $X$ is $\Stp$-step, then for all positive integers $K$ and all words $w\in\Nlan[\Stp]{X}$, the graph $\ngra[\NBP{X}{\Stp}]{[w]}{K}$ is connected.
\end{lemma}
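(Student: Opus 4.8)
The plan is to fix $Y=\NBP{X}{\Stp}$ and prove, by induction on $K$, that $\ngra[Y]{[w]}{K}$ is connected for every $w\in\Nlan[\Stp]{X}$ (so that $[w]\in\Nlan[1]{Y}$) and every $K\ge 1$. The engine of the argument is the structural consequence of the $\Stp$-step hypothesis: by Remark \ref{r:1step}, $Y=\NBP{X}{\Stp}$ is a $1$-step SFT. In particular $Y$ is a subshift, so its language $\lan{Y}$ is prolongeable, hence $(K-1)$-prolongeable for every $K$; and, being $1$-step, $Y$ admits a forbidden language made of words of length $2$, so a word lies in $\lan{Y}$ if and only if all of its factors of length $2$ do. These two facts are all that the induction will consume.

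First I would dispatch the base case $K=1$: the graph $\ngra[Y]{[w]}{1}$ has the single vertex $(0,[w])$ and is connected by Lemma \ref{l:graph}, Item 1. For the inductive step, fix $K\ge 2$, assume $\ngra[Y]{[w]}{K-1}$ is connected, and argue by contradiction: suppose $\ngra[Y]{[w]}{K}$ is not connected. Since $\lan{Y}$ is $(K-1)$-prolongeable and $\ngra[Y]{[w]}{K-1}$ is connected, the three hypotheses of Lemma \ref{l:graphbis} (applied to the word set $Y$, the letter $[w]$ and the integer $N=K$) are met, and it produces a word $W\in\Nlan[1]{Y}^{K+1}$ with $W_{[0,K-1]}\in\Nlan[K]{Y}$, $W_{[1,K]}\in\Nlan[K]{Y}$ and $W\notin\Nlan[K+1]{Y}$.

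The key — and essentially the only nontrivial — step is to show that such a $W$ cannot exist, which is exactly where $1$-step-ness is used. Every factor of length $2$ of $W$ is of the form $W_jW_{j+1}$ with $0\le j\le K-1$; when $j\le K-2$ it is a factor of the allowed word $W_{[0,K-1]}$, and when $j=K-1$ it is the last factor of the allowed word $W_{[1,K]}$. Hence all length-$2$ factors of $W$ belong to $\Nlan[2]{Y}$, and since $Y$ is $1$-step this forces $W\in\Nlan[K+1]{Y}$, contradicting $W\notin\Nlan[K+1]{Y}$. Therefore $\ngra[Y]{[w]}{K}$ is connected, completing the induction. I expect the whole difficulty to be concentrated in translating the $\Stp$-step hypothesis into the locality property of $Y$ via Remark \ref{r:1step}; once that is in hand, Lemma \ref{l:graphbis} reduces connectivity to ruling out a single purely combinatorial obstruction, and the contradiction is immediate.
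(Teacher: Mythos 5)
Your proof is correct and takes essentially the same route as the paper's: the paper argues via a minimal counterexample $K$ (logically the same as your induction), applies Lemma \ref{l:graphbis} using the prolongeability of the subshift's language, and concludes that the resulting word would be a minimal forbidden pattern of length $K+1\ge 3$, contradicting the fact that $\NBP{X}{\Stp}$ is $1$-step (Remark \ref{r:1step}). The only difference is that you spell out explicitly why the word $W$ produced by Lemma \ref{l:graphbis} contradicts $1$-step-ness (all its length-$2$ factors lie in $\Nlan[2]{Y}$, forcing $W\in\lan{Y}$), a step the paper treats as immediate.
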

\begin{proof}
Let us assume that there exists an integer $K$ such that the graph $\ngra[\NBP{X}{\Stp}]{[w]}{K}$ is not connected, and that $K$ is the smallest such integer. From  Lemma \ref{l:graph}-Item 1, we have that $K\geq 2$.

Since the language $\lan{\NBP{X}{\Stp}}$ is $(K-1)$-prolongeable, Lemma \ref{l:graphbis} gives us that there exists a word $w\in \Nlan[1]{\NBP{X}{\Stp}}^{K+1}$ such that $w_{[0,K-1]}\in\Nlan[K]{\NBP{X}{\Stp}}$, $w_{[1,K]}\in\Nlan[K]{\NBP{X}{\Stp}}$ and $w_{[0,K]}\not\in\Nlan[K+1]{\NBP{X}{\Stp}}$. In other words, there is a minimal forbidden pattern of length $K+1\ge3$, which contradicts the fact that $\NBP{X}{\Stp}$ is $1$-step and thus that $X$ is an $\Stp$-step SFT (Remark \ref{r:1step}).
\end{proof}
\begin{lemma}\label{l:sftPreim}
Let $X$ be a SFT. If $X$ is $\Stp$-step then for all positive integers $K$, $\NBP{X}{\Stp}$ is the maximal $K$-preimage of $\NBP{X}{\Stp+K-1}$.
\end{lemma}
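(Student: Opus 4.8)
The plan is to reduce this statement to the two results immediately preceding it: the composition identity of Remark \ref{r:compo} and the connectivity statement of Lemma \ref{l:sftGraph}. First I would record that, by Remark \ref{r:compo}, $\NBP{(\NBP{X}{\Stp})}{K}\similar\NBP{X}{\Stp+K-1}$, so that $\NBP{X}{\Stp}$ is at least \emph{a} $K$-preimage of $\NBP{X}{\Stp+K-1}$. The whole content of the lemma is therefore the assertion that it is the \emph{maximal} such preimage.

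To establish maximality I would invoke the characterization theorem stating that a word set is the maximal $N$-preimage of its own $N$-block presentation if and only if all its length-one subwords are $N$-connected. Applying this to the word set $\NBP{X}{\Stp}$ with $N=K$, the task reduces to showing that every letter $[w]$ of $\Nlan[1]{\NBP{X}{\Stp}}$, that is, every $[w]$ with $w\in\Nlan[\Stp]{X}$, is $K$-connected with regard to $\NBP{X}{\Stp}$, in other words that each graph $\ngra[\NBP{X}{\Stp}]{[w]}{K}$ is connected. But this is exactly the conclusion of Lemma \ref{l:sftGraph}. Hence $\NBP{X}{\Stp}$ is the maximal $K$-preimage of its own $K$-block presentation $\NBP{(\NBP{X}{\Stp})}{K}$.

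Finally I would transport this conclusion along the similarity furnished by Remark \ref{r:compo}. Since $\NBP{(\NBP{X}{\Stp})}{K}\similar\NBP{X}{\Stp+K-1}$, and since the maximal $K$-preimage of a word set is defined only up to similarity — via the transformation of the proof of Theorem \ref{t:carac}, which depends only on the length-two subwords of its argument — the maximal $K$-preimages of these two similar word sets coincide. Therefore $\NBP{X}{\Stp}$ is the maximal $K$-preimage of $\NBP{X}{\Stp+K-1}$, as required.

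I do not expect a genuine obstacle, since both key ingredients are already in hand and the essential combinatorial work was carried out in Lemma \ref{l:sftGraph}; this lemma merely re-reads that connectivity result through the characterization of maximal preimages. The only point demanding a little care is the last, bookkeeping step: checking that the property of being a maximal $K$-preimage is invariant under similarity of the underlying word set, so that the conclusion obtained for $\NBP{(\NBP{X}{\Stp})}{K}$ may be transferred to the similar word set $\NBP{X}{\Stp+K-1}$.
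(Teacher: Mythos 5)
Your proof is correct and takes essentially the same route as the paper: the paper's one-line proof cites Lemma \ref{l:sftGraph} together with Proposition \ref{p:maxpre}, and the proof of that proposition is precisely your combination of Remark \ref{r:compo} with the connectivity characterization of maximal $N$-preimages. The similarity-invariance bookkeeping you flag at the end is the only point the paper leaves implicit, and it follows, for instance, from Proposition \ref{p:proj} applied in both directions.
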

\begin{proof}
Result follows from Lemma \ref{l:sftGraph} and Proposition \ref{p:maxpre}.
\end{proof}

\begin{prop}
\label{p:conj}
Let $X$ and $Y$ be a $\Stp_{X}$- and a $\Stp_{Y}$-step SFTs, respectively.
We have that
\begin{enumerate}
\item if $\card{\Nlan[\Stp_{X}]{X}}<\card{\Nlan[\Stp_{Y}]{Y}}$, the SFTs $X$ and $Y$ are directly conjugate if and only if there exists an integer $K> 0$ such that $\NBP{X}{\Stp_{X}+K}$ is similar to $\NBP{Y}{\Stp_{Y}}$;
\item if $\card{\Nlan[\Stp_{X}]{X}}>\card{\Nlan[\Stp_{Y}]{Y}}$, the SFTs  $X$ and $Y$ are directly conjugate if and only if there exists an integer $K>0$ such that  $\NBP{X}{\Stp_{X}}$ is similar to $\NBP{Y}{\Stp_{Y}+K}$;
\item if $\card{\Nlan[\Stp_{X}]{X}}=\card{\Nlan[\Stp_{Y}]{Y}}$, the SFTs  $X$ and $Y$ are directly conjugate if and only if  $\NBP{X}{\Stp_{X}}$ is similar to $\NBP{Y}{\Stp_{Y}}$.
\end{enumerate}
\end{prop}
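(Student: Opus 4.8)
The plan is to reduce the whole statement to the two canonical presentations $G\similar\NBP{X}{\Stp_{X}}$ and $H\similar\NBP{Y}{\Stp_{Y}}$, which by Remark \ref{r:1step} are $1$-step SFTs satisfying $\card{\Nlan[1]{G}}=\card{\Nlan[\Stp_{X}]{X}}$ and $\card{\Nlan[1]{H}}=\card{\Nlan[\Stp_{Y}]{Y}}$. The three ``if'' directions are immediate, since in each case the hypothesis produces two positive integers $M,N$ with $\NBP{X}{M}\similar\NBP{Y}{N}$, which is by definition direct conjugacy. All the content is in the ``only if'' directions, which I would handle together. So assume $\NBP{X}{M}\similar\NBP{Y}{N}$ with $M,N>0$. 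I would first normalise this witness so that $M\ge\Stp_{X}$ and $N\ge\Stp_{Y}$: taking $(i+1)$-block presentations of both sides and using Remark \ref{r:compo} gives $\NBP{X}{M+i}\similar\NBP{Y}{N+i}$ for every $i\ge0$, so one simply increases $M$ and $N$ by a common amount.

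Set $P=M-\Stp_{X}+1\ge1$, $Q=N-\Stp_{Y}+1\ge1$ and $W=\NBP{X}{M}\similar\NBP{Y}{N}$, so that $W\similar\NBP{G}{P}\similar\NBP{H}{Q}$ by Remark \ref{r:compo}. The structural heart is Lemma \ref{l:sftPreim}: applied to the $\Stp_{X}$-step SFT $X$ it gives $G\similar\NBP{W}{-P}$, and applied to the $\Stp_{Y}$-step SFT $Y$ it gives $H\similar\NBP{W}{-Q}$; thus $G$ and $H$ are maximal preimages of the \emph{same} word set $W$, of respective orders $P$ and $Q$. To compare them I would exploit that the higher-block presentations $\NBP{G}{j}$ of the $1$-step SFT $G$ are themselves $1$-step (Remark \ref{r:1step}): when $Q\le P$, Lemma \ref{l:sftPreim} applied to $\NBP{G}{P-Q+1}$ shows it is \emph{also} the maximal $Q$-preimage of $\NBP{(\NBP{G}{P-Q+1})}{Q}\similar\NBP{G}{P}\similar W$, whence by uniqueness of the maximal preimage $H\similar\NBP{G}{P-Q+1}$; symmetrically, when $P\le Q$ one gets $G\similar\NBP{H}{Q-P+1}$. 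In every case one presentation is a higher-block presentation of the other, the two descriptions agreeing (as $G\similar H$) when $P=Q$.

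It remains to align this alternative with the cardinality trichotomy. If $H\similar\NBP{G}{j}$ then $\card{\Nlan[\Stp_{Y}]{Y}}=\card{\Nlan[j]{G}}\ge\card{\Nlan[1]{G}}=\card{\Nlan[\Stp_{X}]{X}}$, because block-counts of a subshift are non-decreasing, and symmetrically $G\similar\NBP{H}{j'}$ forces the opposite inequality; hence the sign of $\card{\Nlan[\Stp_{X}]{X}}-\card{\Nlan[\Stp_{Y}]{Y}}$ dictates which branch can occur. In Case 1 only the branch $H\similar\NBP{G}{j}$ survives, necessarily with $j\ge2$ (otherwise $H\similar G$ and the two cardinalities would coincide), and then $\NBP{Y}{\Stp_{Y}}\similar\NBP{X}{\Stp_{X}+(j-1)}$ gives the required $K=j-1>0$; Case 2 is the mirror image. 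The delicate point, and the step I expect to be the main obstacle, is Case 3: here $\card{\Nlan[\Stp_{X}]{X}}=\card{\Nlan[\Stp_{Y}]{Y}}$ yields $\card{\Nlan[j]{G}}=\card{\Nlan[1]{G}}$ (or the analogous equality for $H$) with possibly $j\ge2$, and equal block-counts do \emph{not} by themselves give $G\similar H$. To close this gap I would invoke Remark \ref{r:period}: non-decreasingness forces $\card{\Nlan[1]{G}}=\card{\Nlan[2]{G}}$, so $G$ is periodic and $\NBP{G}{j}\similar G$, whence $H\similar\NBP{G}{j}\similar G$, i.e. $\NBP{X}{\Stp_{X}}\similar\NBP{Y}{\Stp_{Y}}$. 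Together with the decidability of similarity (Proposition \ref{c:dec}) and the computability of forbidden languages for block presentations (Remark \ref{r:Nblock}), each of the three equivalences is then effectively checkable.
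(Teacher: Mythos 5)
Your proof is correct and follows essentially the same route as the paper's: normalize the witness so that $M\ge \Stp_{X}$ and $N\ge \Stp_{Y}$, identify the maximal preimages of the common block presentation via Lemma \ref{l:sftPreim} together with the uniqueness of maximal preimages (Corollary \ref{c:maxpre}), eliminate the mismatched branches by monotonicity of block counts, and settle Case 3 with Remark \ref{r:period}. Your repackaging through the $1$-step presentations $G$ and $H$ (applying Lemma \ref{l:sftPreim} to $\NBP{G}{P-Q+1}$, which is $1$-step by Remark \ref{r:1step}) is only a cosmetic variant of the paper's identification of the intermediate maximal preimages of $\NBP{X}{M}$ and $\NBP{Y}{N}$.
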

\begin{proof}
Let us assume that $X$ and $Y$ are directly conjugate with $\NBP{X}{M}\similar\NBP{Y}{N}$ for two positive integers $M$ and $N$. Since $\NBP{X}{M+J}\similar\NBP{Y}{N+J}$ for all $J\geq 0$, we assume without loss of generality that $M\geq \Stp_{X}$ and $N\geq \Stp_{Y}$. We distinguish between several cases according to the relative sizes of $\Nlan[\Stp_{X}]{X}$ and $\Nlan[\Stp_{Y}]{Y}$.
\begin{enumerate}

\item $\card{\Nlan[\Stp_{X}]{X}}<\card{\Nlan[\Stp_{Y}]{Y}}$. Let us first assume that $M-\Stp_{X}> N-\Stp_{Y}$. Since $\NBP{Y}{N}$ and $\NBP{X}{M}$ are similar, they have the same maximal $J$-preimages for all $0<J\leq\min\{M,N\}$. From Lemma \ref{l:sftPreim}, the maximal $(N-\Stp_{Y})$-preimage of $\NBP{Y}{N}$ is $\NBP{Y}{\Stp_{Y}}$ and, from Lemma \ref{l:sftGraph} and Proposition \ref{p:proj}, that of $\NBP{X}{M}$ is $\NBP{X}{M-N+\Stp_{Y}}$.
From Corollary \ref{c:maxpre}, we get that $\NBP{Y}{\Stp_{Y}}$ and $\NBP{X}{\Stp_{X}+K}$ with $K=M-N+\Stp_{Y}-\Stp_{X}> 0$, are similar.
Conversely, if $M-\Stp_{X}\leq N-\Stp_{Y}$, the maximal $(M-\Stp_{X})$-preimage of $\NBP{Y}{N}$ is $\NBP{Y}{N-M+\Stp_{X}}$ and that of $\NBP{X}{M}$ is $\NBP{X}{\Stp_{X}}$. We get that $\NBP{Y}{\Stp_{Y}+J}\similar\NBP{X}{\Stp_{X}}$ with $J=N-M+\Stp_{X}-\Stp_{Y}\geq 0$ and thus that $\card{\Nlan[\Stp_{Y}+J]{Y}}=\card{\Nlan[\Stp_{X}]{X}}$, which contradicts that $\card{\Nlan[\Stp_{X}]{X}}<\card{\Nlan[\Stp_{Y}]{Y}}$.

\item$\card{\Nlan[\Stp_{X}]{X}}>\card{\Nlan[\Stp_{Y}]{Y}}$. 
It is symmetrical with Case 1.

\item$\card{\Nlan[\Stp_{X}]{X}}=\card{\Nlan[\Stp_{Y}]{Y}}$. 
If $M-\Stp_{X}< N-\Stp_{Y}$ then considering  the maximal $(M-\Stp_{X})$-preimages of $\NBP{X}{M}$ and $\NBP{Y}{N}$ leads to $\NBP{X}{\Stp_{X}+K}\similar\NBP{Y}{\Stp_{Y}}$ with $K>0$. It implies that $\card{\Nlan[\Stp_{X}+K]{X}}=\card{\Nlan[\Stp_{X}]{X}}$, which itself implies that $\NBP{X}{\Stp_{X}}\similar\NBP{X}{\Stp_{X}+J}$  for all $J\geq 0$ (Remark \ref{r:period}) and in particular $\NBP{X}{\Stp_{X}}\similar\NBP{Y}{\Stp_{Y}}$. The case where $M-\Stp_{X}> N-\Stp_{Y}$ is symmetrical. If $M-\Stp_{X}=N-\Stp_{Y}$ then $\NBP{X}{M}\similar\NBP{Y}{N}$ implies that the maximal  $(M-\Stp_{X})$-preimages of $\NBP{X}{M}$ and $\NBP{Y}{N}$, which are $\NBP{X}{\Stp_{X}}$ and $\NBP{Y}{\Stp_{Y}}$, are similar.
\end{enumerate}

Conversely, if  $\card{\Nlan[\Stp_{X}]{X}}<\card{\Nlan[\Stp_{Y}]{Y}}$ and $\NBP{X}{\Stp_{X}+K}\similar\NBP{Y}{\Stp_{Y}}$ for a positive integer $K$, then $X$ and $Y$ are directly conjugate. The same holds if  $\card{\Nlan[\Stp_{X}]{X}}>\card{\Nlan[\Stp_{Y}]{Y}}$ and $\NBP{X}{\Stp_{X}}\similar\NBP{Y}{\Stp_{Y}+K}$ for a positive integer $K$, or if $\card{\Nlan[\Stp_{X}]{X}}=\card{\Nlan[\Stp_{Y}]{Y}}$ and $\NBP{X}{\Stp_{X}}\similar\NBP{Y}{\Stp_{Y}}$.
\end{proof}

\begin{theo}
The direct conjugacy between two given SFTs is decidable. 
\end{theo}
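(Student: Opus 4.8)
The plan is to turn the characterization of Proposition~\ref{p:conj} into an effective procedure. Given $X$ and $Y$ by finite forbidden languages, I would first compute their step numbers $\Stp_X$ and $\Stp_Y$, then the finite sets $\Nlan[\Stp_X]{X}$ and $\Nlan[\Stp_Y]{Y}$ together with their cardinalities (all computable in finite time, as recalled above). Comparing these two cardinalities places the instance in exactly one of the three cases of Proposition~\ref{p:conj}. Throughout, I rely on the fact that each $\NBP{X}{M}$ is again an SFT whose finite forbidden language is computable from that of $X$ (Remark~\ref{r:Nblock}), and that similarity of two SFTs is decidable (Proposition~\ref{c:dec}); hence every individual test of the form ``$\NBP{X}{M}\similar\NBP{Y}{N}$'' can be carried out effectively.

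In the balanced case $\card{\Nlan[\Stp_X]{X}}=\card{\Nlan[\Stp_Y]{Y}}$, Proposition~\ref{p:conj} reduces direct conjugacy to the single similarity test $\NBP{X}{\Stp_X}\similar\NBP{Y}{\Stp_Y}$, which is decidable, so nothing more is needed. The delicate cases are the unbalanced ones, say $\card{\Nlan[\Stp_X]{X}}<\card{\Nlan[\Stp_Y]{Y}}$, the reverse inequality being symmetric. There Proposition~\ref{p:conj} asserts that $X$ and $Y$ are directly conjugate if and only if $\NBP{X}{\Stp_X+K}\similar\NBP{Y}{\Stp_Y}$ for \emph{some} positive integer $K$. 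This existential quantifier over $K$ is a priori an unbounded search, and bounding it is the main obstacle.

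To make the search finite I would exploit the monotonicity of the block-count sequence. Write $c=\card{\Nlan[\Stp_Y]{Y}}$ and $d_K=\card{\Nlan[\Stp_X+K]{X}}$, so that $d_0<c$. Since every length-$n$ subword of a subshift extends to the right, the prefix map $\Nlan[n+1]{X}\to\Nlan[n]{X}$ is onto, and hence $K\mapsto d_K$ is nondecreasing. A necessary condition for $\NBP{X}{\Stp_X+K}\similar\NBP{Y}{\Stp_Y}$ is $d_K=c$, so the admissible indices form an interval of integers; moreover, if that interval contains two consecutive values then $d$ repeats a value, and Remark~\ref{r:period} makes the sequence eventually constant, forcing all the corresponding $\NBP{X}{\Stp_X+K}$ to be mutually similar. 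Consequently it suffices to test the \emph{smallest} $K$ with $d_K=c$, if one exists. Such a $K$ is located by computing $d_1,d_2,\dots$ until the first index at which $d_K\ge c$ or $d_K=d_{K-1}$: the former necessarily occurs after finitely many steps when $d$ strictly increases, while the latter detects stabilization via Remark~\ref{r:period}, so the loop always halts. If a smallest $K$ with $d_K=c$ is found, one decidable similarity test settles the case; if instead $d$ overshoots $c$ or stabilizes at a value below $c$ first, then no admissible $K$ exists and $X,Y$ are not directly conjugate. Combining the three cases yields a terminating decision procedure, which proves decidability.
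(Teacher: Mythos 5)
Your proposal is correct and follows essentially the same route as the paper: reduce to the three cases of Proposition~\ref{p:conj}, then in the unbalanced case run a bounded search over $K$ by computing the cardinalities $\card{\Nlan[\Stp_X+K]{X}}$, stopping either when they stabilize (Remark~\ref{r:period}) or reach/exceed $\card{\Nlan[\Stp_Y]{Y}}$, and finishing with one similarity test via Proposition~\ref{c:dec}. Your explicit justification that testing only the \emph{smallest} $K$ with matching cardinality suffices (via monotonicity and Remark~\ref{r:period}) is a detail the paper's procedure uses implicitly, so if anything your write-up is slightly more complete, but the algorithm and its termination bound are the same.
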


\begin{proof}
Let $X$ and $Y$ be two SFTs represented by two finite forbidden languages of patterns of lengths $\Stp_{X}+1$ and $\Stp_{Y}+1$ respectively. In other words, $X$ is $\Stp_{X}$-step and $Y$ is $\Stp_{Y}$-step.

If  $\card{\Nlan[\Stp_{X}]{X}}=\card{\Nlan[\Stp_{Y}]{Y}}$, Proposition \ref{p:conj} and Proposition \ref{c:dec} allow us to conclude.

Let us assume that $\card{\Nlan[\Stp_{X}]{X}}<\card{\Nlan[\Stp_{Y}]{Y}}$, the case where $\card{\Nlan[\Stp_{X}]{X}}>\card{\Nlan[\Stp_{Y}]{Y}}$ being symmetrical.
Proposition \ref{p:conj} ensures that $X$ and $Y$ are directly conjugate if and only if there exists a positive integer $K$ such that $\NBP{X}{\Stp_{X}+K}\similar\NBP{Y}{\Stp_{Y}}$, which implies that $\card{\Nlan[\Stp_{X}+K]{X}}=\card{\Nlan[\Stp_{Y}]{Y}}$ and suggests the following procedure to check if such an integer $K$ exists.
\begin{enumerate}
\item Initialize $J$ to $1$;
\item Compute $\Nlan[\Stp_{X}+J]{X}$;
\item If $\card{\Nlan[\Stp_{X}+J]{X}}=\card{\Nlan[\Stp_{X}+J-1]{X}}$ then we have that $\NBP{X}{\Stp_{X}+J+I}=\NBP{X}{\Stp_{X}+J}$ for all $I\geq 0$ and there is no integer $K$ such that  $\NBP{X}{\Stp_{X}+K}\similar\NBP{Y}{\Stp_{Y}}$, i.e. $X$ and $Y$ are not directly conjugate;
\item Otherwise, $\card{\Nlan[\Stp_{X}+J]{X}}>\card{\Nlan[\Stp_{X}+J-1]{X}}$ and 
\begin{itemize}
\item If $\card{\Nlan[\Stp_{X}+J]{X}}<\card{\Nlan[\Stp_{Y}]{Y}}$ then increment $J$ and go to step 2;
\item If $\card{\Nlan[\Stp_{X}+J]{X}}=\card{\Nlan[\Stp_{Y}]{Y}}$ then $X$ and $Y$ are directly conjugate if and only if $\NBP{X}{\Stp_{X}+J}\similar\NBP{Y}{\Stp_{Y}}$;
\item If $\card{\Nlan[\Stp_{X}+J]{X}}>\card{\Nlan[\Stp_{Y}]{Y}}$  then there is no integer $K$ such that  $\NBP{X}{\Stp_{X}+K}\similar\NBP{Y}{\Stp_{Y}}$, i.e. $X$ and $Y$ are not directly conjugate.
\end{itemize}
\end{enumerate}
The procedure above ends at most after $\card{\Nlan[\Stp_{Y}]{Y}}-\card{\Nlan[\Stp_{X}]{X}}$ iterations in which the only operations are computing the languages of a given order and the block presentations of SFTs or testing the similarity between SFTs. It thus can be performed in finite time (Remark \ref{r:Nblock} and Proposition \ref{c:dec}).
\end{proof}

\bibliographystyle{abbrv}
\bibliography{biblio}

\begin{thebibliography}{1}

\bibitem{BEA}
N.~Aubrun and M.-P. Beal.
\newblock {\em Decidability of Conjugacy of Tree-Shifts of Finite Type}.
\newblock Lecture Notes in Computer Science. Springer - Verlag, 2009.

\bibitem{BOY}
M.~Boyle.
\newblock Open problems in symbolic dynamics.
\newblock {\em Contemporary Mathematics}, 469, 2008.

\bibitem{ecriture}
G.~Didier.
\newblock Caract{\'e}risation des n-{\'e}critures et application {\`a}
  l'{\'e}tude des suites de complexit{\'e} ultimement n+cste.
\newblock {\em Theoretical Computer Science 215 (1999) 31-49}, 1999.

\bibitem{KIM}
K.~H. Kim and F.~W. Roush.
\newblock {\em Decidability of shift equivalence}, volume 1342 of {\em Lectures
  Notes in Mathematics}.
\newblock Springer, 1988.

\bibitem{Lind}
D.~Lind and B.~Marcus.
\newblock {\em An introduction to Symbolic Dynamics and Codings}.
\newblock Cambridge University Press, 1995.

\bibitem{WIL}
R.~Williams.
\newblock {\em Classification of Subshifts of Finite Type}, volume~98 of {\em
  Second}.
\newblock Annals of Mathematics, 1973.

\end{thebibliography}

\end{document}